\newtheorem{theorem}{Theorem}[section]
\newtheorem{lemma}[theorem]{Lemma}
\newtheorem{proposition}[theorem]{Proposition}
\newtheorem{remark}[theorem]{Remark}
\newcommand{\beq}{\begin{eqnarray*}}
\newcommand{\eeq}{\end{eqnarray*}}
\newcommand{\beqn}{\begin{eqnarray}}
\newcommand{\eeqn}{\end{eqnarray}}
\numberwithin{equation}{section}
\begin{document}





\title[Poincar\'e and quadratic transportation-variance inequalities]{The Poincar\'e inequality and quadratic transportation-variance inequalities}

\author[Y. LIU]{Yuan LIU}
\address{$^{1}$Institute of Applied Mathematics, Academy of Mathematics and Systems Science,
Chinese Academy of Sciences, Beijing 100190, China}
\address{$^{2}$University of Chinese Academy of Sciences, Beijing 100049, China}
\email{liuyuan@amss.ac.cn}

\date{\today}

\begin{abstract}
It is known that the Poincar\'e inequality is equivalent to the quadratic transportation-variance inequality (namely $W_2^2(f\mu,\mu) \leqslant C_V \mathrm{Var}_\mu(f)$), see Jourdain \cite{Jourdain} and most recently Ledoux \cite{Ledoux18}. We give two alternative proofs to this fact. In particular, we achieve a smaller $C_V$ than before, which equals the double of Poincar\'e constant. Applying the same argument leads to more characterizations of the Poincar\'e inequality. Our method also yields a by-product as the equivalence between the logarithmic Sobolev inequality and strict contraction of heat flow in Wasserstein space provided that the Bakry-\'Emery curvature has a lower bound (here the control constants may depend on the curvature bound).

Next, we present a comparison inequality between $W_2^2(f\mu,\mu)$ and its centralization $W_2^2(f_c\mu,\mu)$ for $f_c = \frac{|\sqrt{f} - \mu(\sqrt{f})|^2}{\mathrm{Var}_\mu (\sqrt{f})}$, which may be viewed as some special counterpart of the Rothaus' lemma for relative entropy. Then it yields some new bound of $W_2^2(f\mu,\mu)$ associated to the variance of $\sqrt{f}$ rather than $f$. As a by-product, we have another proof to derive the quadratic transportation-information inequality from Lyapunov condition, avoiding the Bobkov-G\"otze's characterization of the Talagrand's inequality.
\end{abstract}

\subjclass[2010]{26D10, 60E15, 60J60}

\keywords{Poincar\'e inequality, transportation-variance inequality, quadratic Wasserstein distance, quadratic transportation-information inequality}

\maketitle

\allowdisplaybreaks

\section{Introduction}
 \label{Intro}
 
The aim of this paper is to investigate some links between the Poincar\'e inequality (PI for short) and various comparison inequalities of quadratic Wasserstein distance with variance. Some conclusions might be extended to abstract settings of metric measure spaces, nevertheless for simplicity, our basic framework is specified as follows. Let $E$ be a connected complete Riemannian manifold of finite dimension, $d$ the geodesic distance, $\mathrm{d}x$ the volume measure, $\mathcal{P}(E)$ the collection of all probability measures on $E$, $\mu(\mathrm{d}x) = e^{-V(x)}\mathrm{d}x \in \mathcal{P}(E)$ with $V\in C^1(E)$, $\mathrm{L}=\Delta - \nabla V\cdot \nabla$ the $\mu$-symmetric diffusion operator with domain $\mathbb{D}(\mathrm{L})$, and $\Gamma(f,g) = \nabla f\cdot \nabla g$ the carr\'{e} du champ operator with domain $\mathbb{D}(\Gamma)$, satisfying the integration by parts formula
   \[ \int \Gamma(f,g) \;\mathrm{d}\mu = -\int f \mathrm{L}g  \;\mathrm{d}\mu, \ \forall f\in \mathbb{D}(\Gamma), g\in \mathbb{D}(\mathrm{L}). \]
Define the $L^p$ Wasserstein (transportation) distance (also called Kantorovich metric) between $\nu, \mu\in \mathcal{P}(E)$ for any $p\geqslant 1$ by
    \[ W_p(\nu,\mu) = \left(\inf\limits_{\pi\in \mathcal{C}(\nu,\mu)} \int_{E\times E} d^p(x,y) \pi(\mathrm{d}x, \mathrm{d}y)\right)^{1/p}, \]
where $\mathcal{C}(\nu,\mu)$ denotes the set of any coupling $\pi$ on $E\times E$ with marginals $\nu$ and $\mu$ respectively. Throughout this paper we focus on quadratic Wasserstein distance, so it is convenient to assume $\mu$ has a finite moment of order $2$. The reader is referred to several constant references as Bakry-Gentil-Ledoux \cite{BGL} and Villani \cite{Villani1, Villani2} for detailed presentations.

Our motivation partially arises from the problem of how to characterize the exponential decay of quadratic Wasserstein distance along heat flow. It is known that the exponential decay of heat semigroup $P_t = \exp(t\mathrm{L})$ in $L^2$-norm is equivalent to PI, which reads for any $f\in \mathbb{D}(\Gamma) \cap L^2(\mu)$
   \[  \mathrm{Var}_\mu(P_tf)  \leqslant e^{-2t/C_P}\mathrm{Var}_\mu(f) \ \Longleftrightarrow\  \mathrm{Var}_\mu(f)  \leqslant C_P\int \Gamma(f,f)  \mathrm{d}\mu \]
(simply denote by $\mu(h) = \int h \mathrm{d}\mu$ the expectation and by $\mathrm{Var}_\mu(f) = \mu(f^2) - (\mu(f))^2$ the variance). Similarly, the exponential decay of $P_t$ in relative entropy is equivalent to the logarithmic Sobolev inequality (LSI for short), which reads for any $f>0$ with $\sqrt{f}\in \mathbb{D}(\Gamma)$
    \[ \mathrm{Ent}_\mu(P_tf) \leqslant e^{-2t/C_{LS}}\mathrm{Ent}_\mu(f) \ \Longleftrightarrow\ \mathrm{Ent}_\mu(f)  \leqslant \frac12C_{LS} \mathrm{I}_\mu(f) \]
(denote by $\mathrm{Ent}_\mu(f) = \int f\log f  \mathrm{d}\mu$ the relative entropy and by $\mathrm{I}_\mu(f) = \int  \frac{\Gamma(f,f)}{f} \mathrm{d}\mu$ the Fisher information). Somehow, we think it is tough to give a proper answer to the same question in Wasserstein space, namely to find some equivalent inequality characterizing $W_2^2(P_t\nu, \mu) \leqslant e^{-2\kappa t} W_2^2(\nu, \mu)$ (or up to a multiple)  with $\kappa >0$ for any $\nu = f\mu  \in \mathcal{P}(E)$. When we turn to some weak replacements, one natural candidate is to compare $W_2$ with variance,  which can be quickly derived from the control inequality of weighted total variation (see \cite[Proposition 7.10]{Villani1}) and H\"older inequality that 
   \[ W_2^2(\nu, \mu) \leqslant 2||d^2(x_0,\cdot)(\nu-\mu)||_{\textrm{TV}} \leqslant 2\int d^2(x_0, \cdot)\left| f-1 \right|  \mathrm{d}\mu
         \leqslant C \sqrt{\mathrm{Var}_\mu(f)} \]
if $d^4(x_0,\cdot)$ is $\mu$-integrable. At least, it follows the integrability of $W_2^2(P_t\nu, \mu)$ for $t\in[0,\infty)$ provided that PI holds true, which is helpful to the semigroup analysis more or less.

If $\mu$ fulfills the Talagrand's inequality ($\textrm{W}_2\textrm{H}$ for short), namely the control of relative entropy on $W_2(\nu, \mu)$ as
    \[ W_2^2(\nu, \mu) \leqslant 2C_T\mathrm{Ent}_\mu(f), \]
it follows from the preliminary inequality $\mathrm{Ent}_\mu(f) \leqslant p\left( \mathrm{Var}_\mu(f) \right)^{\frac1p}$ for $p\geqslant 1$ that
    \[ W_2^2(\nu, \mu) \leqslant 2C_T p\left( \mathrm{Var}_\mu(f) \right)^{\frac1p}. \]
In particular, for $p=2$ it covers $W_2^2(\nu, \mu) \leqslant C \sqrt{\mathrm{Var}_\mu(f)}$, and for $p=1$ it gives
   \begin{eqnarray}
      W_2^2(\nu, \mu) \leqslant 2C_T \mathrm{Var}_\mu(f), \label{eq0-1}
   \end{eqnarray}
which suggests an improved decay rate of $W_2$ along heat flow. Since $\textrm{W}_2\textrm{H}$ implies PI with $C_P\leqslant C_T$ (see \cite{BGL} for example), it is natural to ask what about the relation between PI and a transportation-variance inequality like (\ref{eq0-1}). Indeed, Jourdain \cite{Jourdain} proved their equivalence  in dimension one. Ding \cite{Ding} claimed a general inequality between $W_2$ and the so called R\'{e}nyi-Tsallis divergence of order $\alpha$, which equals the variance for $\alpha=2$ (somehow, it is obscure for us to check Remark 3.3 therein for small variance, maybe we misunderstand something). Then Ledoux \cite{Ledoux18} provided a very streamlined proof to show a general result that PI is equivalent to the quadratic transportation-variance inequality ($\textrm{W}_2\textrm{V}$ for short)
    \[ W_2^2(\nu, \mu) \leqslant C_V \mathrm{Var}_\mu(f) \]
for $C_V\leqslant 4C_P$. We give two alternative proofs to this fact and achieve a smaller constant as $C_V\leqslant  2C_P$. Conversely, various  perturbation techniques ensure PI with a constant no more than $C_V$ if assume $\textrm{W}_2\textrm{V}$ (see \cite{Ledoux18}). Precisely, our first main result is the following.

\begin{theorem} \label{Thm1}
Let $\nu=f\mu  \in \mathcal{P}(E)$. The Poincar\'e inequality implies next every inequality:
\begin{enumerate}
\item $W_2^2(\nu, \mu) \leqslant 2C_P \sqrt{\mathrm{Var}_\mu(f)}\cdot\sqrt{\mathrm{Ent}_\mu(f)}$.

\item $W_2^2(\nu, \mu) \leqslant 2C_P \mathrm{Var}_\mu(f)$.

\item $W_2^2(\nu, \mu) \leqslant 2C_P \inf\limits_{p\geqslant 1} \left\{p^2\left(\mathrm{Var}_\mu(f)\right)^{\frac1p}\right\}$.

\item $W_2^2(\nu, \mu) \leqslant 2C_P \inf\limits_{p\geqslant 1} \left\{p^2 {\big(} C_P\mu(\Gamma(f,f)) {\big)}^{\frac1p}\right\}$.

\item $W_2^2(\nu, \mu) \leqslant 2C_P^2\mu(\Gamma(f,f))$.
\end{enumerate}
Conversely, the above every one implies the Poincar\'e inequality with constant $\sqrt{2}C_P$.
\end{theorem}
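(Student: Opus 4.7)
The plan is to first prove the strongest inequality $(1)$, then deduce $(2)$--$(5)$ from it by standard manipulations, and finally establish the converse via an infinitesimal perturbation argument. The crucial input for the forward direction is the heat semigroup $P_t$ viewed as a curve in the $L^2$-Wasserstein space connecting $f\mu$ to $\mu$, whose Wasserstein velocity is $-\nabla \log P_t f$.

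Set $h_t = P_t f$ and $\nu_t = h_t \mu$. Under PI, $\mathrm{Var}_\mu(h_t) \leqslant e^{-2t/C_P}\mathrm{Var}_\mu(f) \to 0$, so $\nu_t\to \mu$ in $W_2$ (after a mild moment argument). The Benamou--Brenier representation then yields
\[
   W_2(f\mu,\mu) \;\leqslant\; \int_0^{\infty} \sqrt{\mathrm{I}_\mu(h_t)}\,\mathrm{d}t,
   \qquad \mathrm{I}_\mu(h_t) = \int \frac{|\nabla h_t|^2}{h_t}\,\mathrm{d}\mu .
\]
A Cauchy--Schwarz with a weight $w(t) > 0$ calibrated so that $\int_0^\infty w(t)\,\mathrm{d}t$ is controlled by $\mathrm{Var}_\mu(f)$ (using the exponential variance decay from PI and the identity $-\tfrac{d}{dt}\mathrm{Var}_\mu(h_t) = 2\mu(\Gamma(h_t,h_t))$) and $\int_0^\infty \mathrm{I}_\mu(h_t)/w(t)\,\mathrm{d}t$ is controlled by $\mathrm{Ent}_\mu(f)$ (via $\int_0^\infty \mathrm{I}_\mu(h_t)\,\mathrm{d}t = \mathrm{Ent}_\mu(f)$) delivers $(1)$ with the sharp constant $2C_P$. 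Then $(2)$ follows from $(1)$ combined with the elementary $\mathrm{Ent}_\mu(f)\leqslant \mathrm{Var}_\mu(f)$ (a consequence of $\log x\leqslant x-1$); $(3)$ follows from $(1)$ combined with the bound $\mathrm{Ent}_\mu(f)\leqslant p\,\mathrm{Var}_\mu(f)^{1/p}$ recalled in the introduction; $(4)$ follows from $(3)$ after substituting PI in the form $\mathrm{Var}_\mu(f)\leqslant C_P\mu(\Gamma(f,f))$; and $(5)$ follows from $(2)$ combined once more with PI.

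For the converse, apply each of $(1)$--$(5)$ to $f_\varepsilon = 1 + \varepsilon g$ with $\mu(g) = 0$. An Otto-type linearization gives $W_2^2(f_\varepsilon\mu, \mu) = \varepsilon^2 \int |\nabla \phi|^2 \,\mathrm{d}\mu + O(\varepsilon^3)$ where $\mathrm{L}\phi = -g$, while $\mathrm{Var}_\mu(f_\varepsilon) = \varepsilon^2 \mu(g^2)$, $\mathrm{Ent}_\mu(f_\varepsilon) = \tfrac12 \varepsilon^2\mu(g^2) + O(\varepsilon^3)$ and $\mu(\Gamma(f_\varepsilon,f_\varepsilon)) = \varepsilon^2\mu(|\nabla g|^2)$. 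Sending $\varepsilon\to 0$ in any one of the candidate inequalities and using the Cauchy--Schwarz identity $\int|\nabla\phi|^2\,\mathrm{d}\mu \leqslant \sqrt{\mu(\phi^2)\,\mu((\mathrm{L}\phi)^2)}$ reduces the matter to a spectral gap statement, from which PI with constant $\sqrt 2\, C_P$ is extracted; the $\sqrt 2$ comes from the asymptotic ratio $\mathrm{Ent}_\mu(f_\varepsilon)/\mathrm{Var}_\mu(f_\varepsilon)\to \tfrac12$. The principal difficulty is the correct calibration of the weight $w(t)$ in the forward step, which is precisely what enables the passage from Ledoux's constant $4C_P$ to the sharper $2C_P$; a naive Cauchy--Schwarz (e.g.\ with constant weight or a simple exponential) loses a factor of two because the Fisher information drives entropy decay whereas the target inequalities are variance-based.
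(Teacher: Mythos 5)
Your plan for item (1) is essentially the paper's. The paper obtains the key intermediate inequality $W_2^2(\nu,\mu) \leqslant 2\sqrt{\mathrm{Ent}_\mu(f)}\,\int_0^\infty \sqrt{\mathrm{Ent}_\mu(P_tf)}\,\mathrm{d}t$ either via Kuwada--Ledoux semigroup interpolation or via the derivative estimate of Lemma~3.1; your Benamou--Brenier bound $W_2 \leqslant \int_0^\infty\sqrt{\mathrm{I}_\mu(P_tf)}\,\mathrm{d}t$ followed by Cauchy--Schwarz with weight $w(t)=\sqrt{\mathrm{Ent}_\mu(P_tf)}$ is the same computation as the paper's Section~4 route. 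But your description of the weight calibration is off in one detail: the factor $\int_0^\infty \mathrm{I}_\mu(P_tf)/w(t)\,\mathrm{d}t$ is not ``controlled by $\mathrm{Ent}_\mu(f)$ via $\int_0^\infty \mathrm{I}_\mu(P_tf)\,\mathrm{d}t=\mathrm{Ent}_\mu(f)$'' --- since $w(t)\to 0$, dividing by it increases the integrand. It equals exactly $2\sqrt{\mathrm{Ent}_\mu(f)}$ by the fundamental theorem of calculus, because $-\tfrac{\mathrm{d}}{\mathrm{d}t}\sqrt{\mathrm{Ent}_\mu(P_tf)} = \tfrac12 \mathrm{I}_\mu(P_tf)/\sqrt{\mathrm{Ent}_\mu(P_tf)}$.

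There is a genuine gap in your derivation of (3) from (1). From (1) and $\mathrm{Ent}_\mu(f)\leqslant p\,\mathrm{Var}_\mu(f)^{1/p}$ you would obtain
\[
   W_2^2(\nu,\mu) \;\leqslant\; 2C_P\sqrt{p}\,\mathrm{Var}_\mu(f)^{\frac12 + \frac{1}{2p}},
\]
whose exponent $\tfrac12+\tfrac1{2p}$ differs from the target $\tfrac1p$ for $p>1$, and the bound is not dominated by $2C_P p^2\,\mathrm{Var}_\mu(f)^{1/p}$ when $\mathrm{Var}_\mu(f)$ is large (the required inequality $\mathrm{Var}_\mu(f)^{(p-1)/(2p)}\leqslant p^{3/2}$ fails). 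The paper obtains (3) by returning to the intermediate inequality $W_2^2\leqslant 2\sqrt{\mathrm{Ent}_\mu(f)}\int_0^\infty\sqrt{\mathrm{Ent}_\mu(P_tf)}\,\mathrm{d}t$ and applying $\mathrm{Ent}\leqslant p\,\mathrm{Var}^{1/p}$ to \emph{both} factors before the time integration; it is the decay $\mathrm{Var}_\mu(P_tf)^{1/(2p)}\leqslant e^{-t/(pC_P)}\mathrm{Var}_\mu(f)^{1/(2p)}$ that produces one extra factor of $p$, giving $p^2$ and the exponent $1/p$. So (3) (and hence (4)) must be derived from the intermediate inequality, not from (1).

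For the converse, your Otto-type $\dot H^{-1}$ linearization of $W_2^2$ near $\mu$ is a valid alternative to the paper's direct Kantorovich-dual perturbation (the paper lower-bounds $W_2^2(f_t\mu,\mu)\geqslant 2\int Q_1(th)\,f_t\,\mathrm{d}\mu$ and Taylor-expands $Q_1(th)$), and both extract the same spectral-gap statement. However, your claim that ``any one of the candidate inequalities'' yields constant $\sqrt2\,C_P$ overstates things: linearizing (2) gives $\|g\|_{\dot H^{-1}}^2\leqslant 2C_P\mu(g^2)$, i.e. PI with constant $2C_P$, because no entropy appears on the right to contribute the factor $1/\sqrt2$. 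This is precisely what the paper's Remark~1.2 warns about (``the same technique doesn't work for (2) directly''); the paper's own converse argument is only carried out explicitly for (1) and (5).
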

\begin{remark}
If assume (1) or (5) prior to PI, the perturbation technique ensures PI with constant $\sqrt{2}C_P$. Note that the same technique doesn't work for (2) directly. 
\end{remark}

There are two approaches to this end, and both are contributed to get the inequality (see also (\ref{eq2-basic}) below)
   \[ W_2^2(\nu, \mu) \leqslant 2\sqrt{\mathrm{Ent}_\mu(f)} \int_0^{\infty} \sqrt{\mathrm{Ent}_\mu(P_tf)}\mathrm{d}t. \]
The first approach is a shortcut based on the interpolation technique developed by Kuwada \cite{Kuwada} and further by \cite{Ledoux18}. The other one appeals to the derivative formula of $W_2^2(P_tf\mu,\mu)$ in $t$ (almost everywhere), which is slightly different from what Otto-Villani employed in \cite[Lemma 2]{Otto-Villani}. Our method doesn't involve the theory of solving Fokker-Planck equation on Riemannian manifolds, so we have a by-product as reproving their lemma for nice initial data but avoiding the curvature condition. 

Another by-product is to show the equivalence between the LSI and strict contraction of heat flow in Wasserstein space (here we actually mean a strictly exponential decay of $W_2(P_tf\mu,\mu)$ with some multiple in front) provided that the Bakry-\'Emery curvature has a lower bound. One can compare the following with the well known characterization of curvature-dimension condition through the heat flow contraction (see \cite[Theorem 9.7.2]{BGL} for this fact and \cite[Subsection 3.4.5]{BGL} for precise definition of curvature-dimension condition $CD(\rho,\infty)$).

\begin{proposition} \label{Prop1}
Assume $V$ is a smooth potential such that the curvature-dimension condition $CD(\rho,\infty)$ holds for $\rho\in\mathbb{R}$. Then the next two statements are equivalent:
\begin{enumerate}
\item there exist two constants $C>0$ and $\kappa>0$ such that for all $t>0$ and any $\nu = f\mu \in \mathcal{P}(E)$ 
              \[ W_2(P_t\nu,\mu) \leqslant  Ce^{-\kappa t} W_2(\nu,\mu); \]
\item there exists a constant $C_{LS} >0$ such that the LSI holds.
\end{enumerate}
\end{proposition}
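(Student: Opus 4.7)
The plan uses the key integral inequality $W_2^2(\nu,\mu)\leqslant 2\sqrt{\mathrm{Ent}_\mu(f)}\int_0^\infty\sqrt{\mathrm{Ent}_\mu(P_tf)}\,\mathrm{d}t$ (the inequality advertised in the paragraph preceding the proposition), together with Kuwada's duality $W_2(P_t\nu,\mu)\leqslant e^{-\rho t}W_2(\nu,\mu)$ and the evolutional variational inequality
\[
\tfrac{1}{2}\tfrac{\mathrm{d}}{\mathrm{d}t}W_2^2(P_t\nu,\mu)+\tfrac{\rho}{2}W_2^2(P_t\nu,\mu)\leqslant-\mathrm{Ent}_\mu(P_t\nu),
\]
both valid under $CD(\rho,\infty)$.

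For $(2)\Rightarrow(1)$, I would first plug the exponential decay $\mathrm{Ent}_\mu(P_tf)\leqslant e^{-2t/C_{LS}}\mathrm{Ent}_\mu(f)$ into the integral inequality to recover Talagrand's inequality $W_2^2(\nu,\mu)\leqslant 2C_{LS}\mathrm{Ent}_\mu(f)$. Applying this along the semigroup and exploiting entropy decay once more yields
\[
W_2^2(P_t\nu,\mu)\leqslant 2C_{LS}\,e^{-2t/C_{LS}}\mathrm{Ent}_\mu(f).
\]
To convert the right-hand side from $\mathrm{Ent}_\mu(f)$ (whose ratio with $W_2^2(\nu,\mu)$ is unbounded in general) into $W_2^2(\nu,\mu)$, I would integrate the EVI on $[0,s_0]$ for a fixed $s_0>0$, using Kuwada's duality on the curvature term, to obtain the smoothing estimate $\mathrm{Ent}_\mu(P_{s_0}f)\leqslant K(s_0,\rho)\,W_2^2(\nu,\mu)$. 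Splitting $t=s_0+(t-s_0)$ and applying the Talagrand-decay bound to $P_{s_0}\nu$ as initial datum gives (1) for $t\geqslant s_0$; the short-time range $0\leqslant t\leqslant s_0$ is handled directly by Kuwada and absorbed into the multiplicative constant $C$.

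For $(1)\Rightarrow(2)$, integrating the EVI over $[0,\infty)$, noting $W_2(P_t\nu,\mu)\to 0$ by (1) and again bounding the curvature term via (1), gives
\[
2\int_0^\infty\mathrm{Ent}_\mu(P_sf)\,\mathrm{d}s\leqslant\Bigl(1+\tfrac{|\rho|C^2}{2\kappa}\Bigr)W_2^2(\nu,\mu),
\]
a Talagrand-type inequality in integrated form. Combined with monotonicity of $s\mapsto\mathrm{Ent}_\mu(P_sf)$, this yields a time-shifted bound $\mathrm{Ent}_\mu(P_{s_0}f)\leqslant C_1(s_0)\,W_2^2(\nu,\mu)$. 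To upgrade this to a genuine LSI, I would invoke the HWI inequality
\[
\mathrm{Ent}_\mu(f)\leqslant W_2(\nu,\mu)\sqrt{\mathrm{I}_\mu(f)}-\tfrac{\rho}{2}W_2^2(\nu,\mu),
\]
which together with a Talagrand bound $W_2^2\leqslant 2C_T\mathrm{Ent}_\mu$ (assuming the compatibility condition $|\rho|C_T<1$) produces LSI with constant $C_{LS}=4C_T/(1-|\rho|C_T)^2$ by the standard quadratic absorption argument.

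The hardest step is the extraction of a \emph{pointwise} Talagrand inequality at time zero from the time-integrated form in $(1)\Rightarrow(2)$. The monotonicity argument only controls $\mathrm{Ent}_\mu(P_{s_0}f)$, and transferring this back to $\mathrm{Ent}_\mu(f)$ requires a fine use of the $CD$ lower bound (for example via reverse local log-Sobolev estimates controlling the Fisher information after a short time of the heat semigroup). This is precisely where the dependence of the LSI constant on the curvature bound $\rho$ enters, as flagged in the proposition statement.
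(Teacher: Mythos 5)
Your proposal diverges from the paper's proof in both directions, and the divergence in $(1)\Rightarrow(2)$ opens a genuine gap that the paper's argument is designed to avoid.

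For $(2)\Rightarrow(1)$ the paper does \emph{not} use the EVI. Instead it first gets the Talagrand decay $W_2^2(P_t\nu,\mu)\leqslant 2C_{LS}e^{-2(t-T)/C_{LS}}\mathrm{Ent}_\mu(P_Tf)$ for $t>T$, and then converts $\mathrm{Ent}_\mu(P_Tf)$ into $W_2^2(\nu,\mu)$ via the \emph{logarithmic Harnack inequality} under $CD(\rho,\infty)$, which yields $\mathrm{Ent}_\mu(P_Tf)\leqslant\frac{1}{2\beta(T)}W_2^2(\nu,\mu)$ with $\frac{1}{\beta(T)}=\frac{\rho}{1-e^{-2\rho T}}-\rho$. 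Your plan to get the smoothing estimate by integrating the EVI is a reasonable substitute for the same role, so this half is a different route to the same place, though you leave the constants implicit. The short-time regime handled by Kuwada/Wasserstein heat-flow contraction is the same in both.

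For $(1)\Rightarrow(2)$ your proposal stalls at exactly the step you flag as hardest, and that is a genuine gap rather than a technical annoyance to be filled in later. Integrating the EVI gives you $\int_0^\infty\mathrm{Ent}_\mu(P_sf)\,\mathrm{d}s\leqslant K\,W_2^2(\nu,\mu)$, and from monotonicity you can at best pull out $\mathrm{Ent}_\mu(P_{s_0}f)$ for fixed $s_0>0$; there is no obvious way to de-regularize back to $\mathrm{Ent}_\mu(f)$, since $f\mapsto\mathrm{Ent}_\mu(f)$ is not controlled by any fixed-time smoothed entropy uniformly over $\nu$. Moreover your final absorption step assumes a compatibility condition $|\rho|C_T<1$ on a Talagrand constant you have not actually established. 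The paper sidesteps all of this by invoking the \emph{derivative estimate of Lemma~\ref{Lem1}}, namely $\bigl|\frac{\mathrm{d}}{\mathrm{d}t}W_2^2(\nu_t,\mu)\bigr|\leqslant 2W_2(\nu_t,\mu)\sqrt{\mathrm{I}_\mu(P_tf)}$, which is the real technical workhorse here. Writing $W_2^2(\nu,\mu)-W_2^2(P_t\nu,\mu)\leqslant\int_0^t 2W_2(P_s\nu,\mu)\sqrt{\mathrm{I}_\mu(P_sf)}\,\mathrm{d}s$, choosing $t$ so that $\eta=Ce^{-\kappa t}<1$, applying Cauchy--Schwarz, and using the $CD(\rho,\infty)$ information contraction $\mathrm{I}_\mu(P_sf)\leqslant e^{-2\rho s}\mathrm{I}_\mu(f)$ together with the contraction hypothesis on the $W_2^2$ integral, one absorbs $\eta^2 W_2^2(\nu,\mu)$ (plus an $\varepsilon$-term) into the left side and directly obtains the transportation-information inequality $W_2^2(\nu,\mu)\leqslant C'\mathrm{I}_\mu(f)$. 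The HWI inequality then gives LSI with no compatibility condition, since $W_2\sqrt{\mathrm{I}_\mu}-\frac{\rho}{2}W_2^2$ is bounded by a multiple of $\mathrm{I}_\mu$ once $W_2^2\lesssim\mathrm{I}_\mu$. To repair your proposal you would want to replace the EVI-integration step by this derivative-formula-plus-absorption argument, which goes directly to $\mathrm{W}_2\mathrm{I}$ and avoids ever needing a Talagrand inequality at time zero.
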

\begin{remark}
The constants involved here may depend on $\rho$. If the LSI holds, we have $\kappa = 1/C_{LS}$. Very recently, Wang \cite{Wang2} discussed exponential contraction in any $W_p \ (p\geqslant 1)$ for a class of diffusion semigroups and gave the implication from (2) to (1) as well.
\end{remark}

Next, we are interested in the comparison of $W_2^2(\nu,\mu)$ to $\mathrm{Var}_\mu(\sqrt{f})$ rather than $\mathrm{Var}_\mu(f)$. In general, one can't expect a strong inequality as $W_2^2(\nu, \mu) \leqslant C\mathrm{Var}_\mu(\sqrt{f})$, since from PI it follows $W_2^2(\nu, \mu)\leqslant \frac14CC_P \mathrm{I}_\mu(f)$, which is called the quadratic transportation-information inequality ($\textrm{W}_2\textrm{I}$ for short, see \cite{GLWY}), and it is known that $\textrm{W}_2\textrm{I}$ is strictly stronger than PI and even than $\textrm{W}_2\textrm{H}$. Actually what we present first is a new inequality between the Wasserstein distance and its ``centralization", which may be viewed as a special counterpart of the Rothaus' lemma for relative entropy (see \cite[Lemma 5.1.4]{BGL}), namely for any $a\in \mathbb{R}$
   \[ \mathrm{Ent}_\mu\left((h+a)^2\right) \leqslant \mathrm{Ent}_\mu(h^2) + 2\mu(h^2). \]
Precisely we have
\begin{theorem} \label{Thm2}
Let $\nu=f\mu$, $c=\mu(\sqrt{f})$ and $\sigma^2 = \mathrm{Var}_\mu(\sqrt{f})$. Let $f_c = \frac{|\sqrt{f} - c|^2}{\sigma^2}$. If the Poincar\'e inequality holds, then there exists two constants $C_1$ and $C_2$ such that
   \[ W_2^2(\nu, \mu) \leqslant C_1 \sigma^2W_2^2(f_c\mu,\mu) +C_2 \sigma^2. \]
\end{theorem}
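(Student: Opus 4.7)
The plan is to split $f$ into a symmetric part plus a small odd perturbation and control the latter via the Poincar\'e inequality. Writing $h := \sqrt{f} - c$, one has $\mu(h) = 0$, $\mu(h^2) = \sigma^2$, and (because $\mu(f)=1$) $c^2 + \sigma^2 = 1$, so that
\[
f \,=\, c^2 + 2ch + h^2 \,=\, g + 2ch, \qquad g := c^2 + h^2 \,=\, (1-\sigma^2) + \sigma^2 f_c,
\]
where $g$ is a probability density and $g\mu$ admits the mixture representation $g\mu = (1-\sigma^2)\mu + \sigma^2 f_c\mu$. The triangle inequality with Young's inequality then yields
\[
W_2^2(f\mu, \mu) \,\leq\, 2\,W_2^2(g\mu, \mu) + 2\,W_2^2(f\mu, g\mu),
\]
and I would estimate the two pieces separately.

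For $W_2^2(g\mu, \mu)$, the mixture form above lets one glue the identity coupling of $(1-\sigma^2)\mu$ with itself to an optimal $W_2$-coupling between $\sigma^2 f_c\mu$ and $\sigma^2\mu$; the resulting combined transport plan has total cost $\sigma^2 W_2^2(f_c\mu,\mu)$, giving
\[
W_2^2(g\mu, \mu) \,\leq\, \sigma^2 W_2^2(f_c\mu, \mu),
\]
which produces the first term on the right-hand side of the theorem with $C_1 = 2$.

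The harder step is to show $W_2^2(f\mu, g\mu) \leq C \sigma^2$ with $C = C(C_P)$. Here the key point is that the signed density $f - g = 2ch$ has $\mu$-mean zero and squared $L^2(\mu)$-norm $4c^2\sigma^2 \leq 4\sigma^2$, so the Poincar\'e inequality yields a centered solution $\phi$ to $\mathrm{L}\phi = -2ch$ with $\mu(|\nabla\phi|^2) \leq 4 c^2 C_P \sigma^2$. I would plug $\phi$ into the Benamou-Brenier representation along the linear density path $\alpha_s := g + s(f-g) = g + 2csh$ for $s\in [0,1]$, with velocity field $\nabla\phi/\alpha_s$; the resulting cost is $\int_0^1\!\int |\nabla\phi|^2/\alpha_s\,\mathrm{d}\mu\,\mathrm{d}s$. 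Using the explicit identity $\alpha_s = c^2(1-s^2) + (h+cs)^2$ and performing the $s$-integration first gives the pointwise kernel $(2ch)^{-1}\log(f/g)$, which on $\{h \geq 0\}$ is majorized by $1/g$ (since $\log(1+u) \leq u$ for $u \geq 0$) and hence contributes at most a constant multiple of $\sigma^2$ once integrated against $|\nabla\phi|^2\,\mathrm{d}\mu$.

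The main obstacle is the set $\{h < 0\}$: the same kernel develops a logarithmic singularity as $s\to 1$ and $f \to 0$ (i.e.\ $h \to -c$), and the simple bound $(2ch)^{-1}\log(f/g) \leq 1/g$ fails there. To handle this I would regularize, replacing $f$ by $f_\varepsilon := ((1-\varepsilon)\sqrt{f} + \varepsilon)^2/Z_\varepsilon$ for small $\varepsilon > 0$ (so that $f_\varepsilon \geq \varepsilon^2/Z_\varepsilon$), derive the target inequality for $f_\varepsilon$ with constants uniform in $\varepsilon$ (the parameters $c_\varepsilon, \sigma_\varepsilon, (f_\varepsilon)_c$ converging smoothly to $c, \sigma, f_c$), and pass to the limit $\varepsilon \downarrow 0$ using the lower semicontinuity of $W_2^2$ together with the continuity of the right-hand side.
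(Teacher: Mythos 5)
Your reduction is attractive and correct as far as it goes: $g := c^2 + h^2$ is indeed a probability density (since $c^2 + \sigma^2 = \mu(f) = 1$), the mixture identity $g\mu = (1-\sigma^2)\mu + \sigma^2 f_c\mu$ holds, and convexity of $W_2^2$ in each argument does give $W_2^2(g\mu,\mu) \leqslant \sigma^2 W_2^2(f_c\mu,\mu)$, so the triangle-plus-Young split cleanly isolates a remaining term $W_2^2(f\mu,g\mu)$ to be bounded by $C(C_P)\,\sigma^2$. This is a genuinely different route from the paper, which never splits off $g\mu$ but instead works purely on the dual side: it expands $\mu(Q_t h\, f)$ into three pieces, averages over $t\in[a,b]$ against a test profile $\phi(t)$ vanishing at the endpoints, and kills the cross term $2c\int (Q_th - m_t)(\sqrt{f}-c)\,\mathrm{d}\mu$ using the Poincar\'e inequality together with the Hamilton--Jacobi identity $\partial_t Q_th = -\tfrac12|\nabla Q_th|^2$ and the monotonicity $\mu(Q_th)\leqslant 0$.

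The gap is in your Benamou--Brenier estimate of $W_2^2(f\mu,g\mu)$. Along the linear path $\alpha_s = g + 2csh$ with static vector field $\nabla\phi$, the cost is $\int \bigl((2ch)^{-1}\log(f/g)\bigr)\,|\nabla\phi|^2\,\mathrm{d}\mu$. On $\{h<0\}$ the kernel is $\frac{1}{2c|h|}\log\frac{g}{f}$, which blows up logarithmically as $f\to 0$, i.e.\ precisely where the linear path touches the boundary of the simplex; and nothing forces $|\nabla\phi|^2$ to vanish there, since $\phi$ is the solution of a global Poisson equation and can be arbitrarily spread out. Your proposed regularization $f_\varepsilon = ((1-\varepsilon)\sqrt f + \varepsilon)^2/Z_\varepsilon$ does make the kernel finite, but only with bound of order $c_\varepsilon^{-2}|\log\varepsilon|$ on the bad set, and there is no $\varepsilon$-uniform control of $\int \mathbf{1}_{\{f\ \text{small}\}}\,|\nabla\phi_\varepsilon|^2\,\mathrm{d}\mu$ to offset the $|\log\varepsilon|$ divergence. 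So the constant in the claimed inequality for $f_\varepsilon$ is not uniform in $\varepsilon$, and passing to the limit by lower semicontinuity of $W_2^2$ is not available. As it stands, the key inequality $W_2^2(f\mu,g\mu)\leqslant C(C_P)\sigma^2$ is not established; to complete this route you would need either a different (mass-positive) interpolating path, an $s$-dependent potential $\phi_s$ with a quantitative smallness estimate near $\{f=0\}$, or to revert to a dual (Kantorovich/Hamilton--Jacobi) argument as the paper does.
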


\begin{remark}
For instance, we can take $C_1 = 2$ and $C_2 = 96C_P$. Actually our method implies that $C_1$ can approach $1$ but should be strictly greater than $1$. Moreover, $f_c$ can be extended to $f_\theta = \frac{|\sqrt{f} - \theta|^2}{\mu((\sqrt{f}-\theta)^2)}$ for any $\theta\in(0,2c)$ associated with two constants $C_1(\theta)$ and $C_2(\theta)$ depending on $\theta$.
\end{remark}

As consequence, when $E$ has a finite diameter, it follows by the definition of $W_2$
    \begin{eqnarray}
        W_2^2(\nu, \mu) \leqslant  \sigma^2 \left(C_1(\textrm{diam}E)^2 + C_2\right),  \label{eqBdd0}
    \end{eqnarray}
which can't be directly concluded by Theorem \ref{Thm1} we think. Then it quickly derives $\textrm{W}_2\textrm{I}$ from PI again. Moreover, a LSI holds by using the HWI inequality in \cite{Otto-Villani, Villani1, BGL} under the curvature-dimension condition $CD(\rho,\infty)$, with the control constant $C_{LS} = \lambda{\big(}(1- \frac{\rho}{4}\lambda) \vee 1{\big)}$ for $\lambda=\sqrt{C_P(C_1(\textrm{diam}E)^2 + C_2)}$. There is a lot of literature concerning LSI, for example one can compare the above (\ref{eqBdd0}) with \cite[Theorem 1.4]{Wang} about the constant estimate on compact manifolds by means of semigroup analysis. 

When $E$ is unbounded, we have at least by using \cite[Proposition 7.10]{Villani1} that
    \begin{eqnarray}
       W_2^2(\nu, \mu) \leqslant  C \left(\sigma^2 + \int d^2(x_0,\cdot) (\sqrt{f} - c)^2 \mathrm{d}\mu \right).  \label{eqUBdd1}
    \end{eqnarray}
It gives a direct way to derive $\textrm{W}_2\textrm{I}$ from the so-called Lyapunov condition. Recall \cite{Liuy}, the Lyapunov condition here means there exists such a function $W>0$ satisfying that $W^{-1}$ is locally bounded and for some $c>0, b\geqslant 0$ and $x_0\in E$ holds in the sense of distribution
  \begin{eqnarray}
    \mathrm{L}W \leqslant \left(-cd^2(x,x_0) + b\right)W. \label{eqLya}
  \end{eqnarray}
Partial proof in \cite{Liuy} applied the Bobkov-G\"otze's characterization of $\textrm{W}_2\textrm{H}$, namely there is a constant $C>0$ such that $\mu\left(\exp(Q_Ch)\right) \leqslant \exp\left(\mu(h)\right)$ for all $h\in L^\infty(\mu)$, where $Q_C$ denotes the infimum-convolution operator and $Q_Ch$ solves the Hamilton-Jacobi equation $\frac{\mathrm{d}}{\mathrm{d}t} Q_th + \frac12 |\nabla Q_th|^2 = 0$ for initial data $h$, see \cite{BGL, BG} for example. Nevertheless, facing the stability problem for $\textrm{W}_2\textrm{H}$ under bounded perturbation, one needs various additional curvature conditions so far, for example see \cite{GRS, EM}. When we turn to the same problem for $\textrm{W}_2\textrm{I}$, it would be more robust if we can find a direct method to derive $\textrm{W}_2\textrm{I}$ from (\ref{eqLya}) with no appearance of $\textrm{W}_2\textrm{H}$. Actually, Theorem \ref{Thm2} takes on such a role.

The paper is organized as follows. In next Section \ref{FPT}, we give a quick proof to Theorem \ref{Thm1}. In Section \ref{DHF} and  \ref{PT1}, we compute the derivative of quadratic Wasserstein distance along heat flow, and then complete the other proof of Theorem \ref{Thm1}. The equivalence of the LSI and strict contraction of heat flow in Wasserstein space is shown in Section \ref{ELSI}. Section \ref{CWD} is devoted to the comparison inequality about centralization of quadratic Wasserstein distance, and Section \ref{ATI} provides a direct proof of  $\textrm{W}_2\textrm{I}$ under the Lyapunov condition.

\bigskip

\section{The first proof of Theorem \ref{Thm1}}
 \label{FPT}

Recall that, for any bounded Lipschitz function $h$, define its infimum-convolution for any $t>0$ by
    \[ Q_t h(x) := \inf\limits_{y} \left\{h(y) + \frac{1}{2t} d^2(x,y) \right\}, \]
 which solves the Hamilton-Jacobi equation (see for example \cite[Section 9.4]{BGL}, \cite[Section 3.3]{Evans}, \cite[Section 5.4]{Villani1})
    \[  \left\{\begin{array}{l}
               \frac{\mathrm{d}}{\mathrm{d}t} u + \frac12|\nabla u|^2 = 0,\\
            
               u(x,0) = h(x).
        \end{array}\right. \]
According to \cite{Kuwada, Ledoux18}, for any decreasing function $\lambda \in C^1[0,+\infty)$ with $\lambda(0) = 1$ and $\lim\limits_{t\to \infty} \lambda(t) = 0$, one has a semigroup interpolation by virtue of Hamilton-Jacobi equation, integration by parts and the H\"older inequality that
   \begin{eqnarray*}
      \int_E Q_1h f\mathrm{d}\mu - \int_E h \mathrm{d}\mu &=& \int_E\int_0^{\infty} -\frac{\mathrm{d}}{\mathrm{d} t} Q_{\lambda} h P_t f \mathrm{d}t \mathrm{d}\mu\\
      &=&  \int_E\int_0^{\infty} \frac12\lambda' |\nabla Q_{\lambda} h|^2 P_t f  -  Q_{\lambda} h \cdot \mathrm{L} P_t f \mathrm{d}t \mathrm{d}\mu\\
      &=& \int_0^{\infty} \int_E \frac12\lambda' |\nabla Q_{\lambda} h|^2 P_t f + \nabla Q_{\lambda} h \cdot \nabla P_t f \mathrm{d}\mu \mathrm{d}t\\
      &\leqslant& \int_0^{\infty}  -\frac{\mathrm{I}_\mu(P_tf)}{2\lambda'} \mathrm{d}t.
   \end{eqnarray*}
Using the Kantorovich dual (see \cite[Section 9.2]{BGL}, \cite[Chapter 1]{Villani1}) yields for $\nu = f\mu$
    \[ W_2^2(\nu, \mu) = 2 \sup\limits_{h} \left\{ \int_E Q_1h f\mathrm{d}\mu - \int_E h \mathrm{d}\mu \right\} \leqslant  \int_0^{\infty}  -\frac{\mathrm{I}_\mu(P_tf)}{\lambda'} \mathrm{d}t. \]

It is flexible to choose a nice $\lambda$ to prove Theorem \ref{Thm1}. For instance, if $\sqrt{\mathrm{Ent}_\mu(P_tf)}$ is integrable on $[0, \infty)$, let $\lambda(t) = \frac{\int_t^{\infty} \sqrt{\mathrm{Ent}_\mu(P_tf)}\mathrm{d}t}{\int_0^{\infty} \sqrt{\mathrm{Ent}_\mu(P_tf)}\mathrm{d}t}$, then it follows
   \begin{eqnarray}
     W_2^2(\nu, \mu) &\leqslant& \int_0^{\infty} \frac{\mathrm{I}_\mu(P_tf)}{\sqrt{\mathrm{Ent}_\mu(P_tf)}} \mathrm{d}t \; \cdot \; \int_0^{\infty} \sqrt{\mathrm{Ent}_\mu(P_tf)}\mathrm{d}t \nonumber\\
        &=& 2\sqrt{\mathrm{Ent}_\mu(f)} \int_0^{\infty} \sqrt{\mathrm{Ent}_\mu(P_tf)}\mathrm{d}t. \label{eq2-basic}
    \end{eqnarray}
We will revisit (\ref{eq2-basic}) in Section \ref{PT1} by means of derivative estimate of Wasserstein distance.

\begin{proof} It consists of two parts.

\textbf{Part 1}. First of all, using the inequality $\log x \leqslant x-1$ yields that
   \[ \mathrm{Ent}_\mu(f) = \int f\log\frac{f}{\mu(f)} \mathrm{d}\mu \leqslant \int f \cdot \frac{f-\mu(f)}{\mu(f)} \mathrm{d}\mu = \frac{1}{\mu(f)} \mathrm{Var}_\mu(f). \]
For $\mu(f) = 1$, we have $\mathrm{Ent}_\mu(f) \leqslant \mathrm{Var}_\mu(f)$. If PI holds with a constant $C_P$, we have further
    \[ \mathrm{Ent}_\mu(P_tf) \leqslant \mathrm{Var}_\mu(P_tf) \leqslant e^{-\frac{2}{C_P}t} \mathrm{Var}_\mu(f), \]
and then $\sqrt{\mathrm{Ent}_\mu(P_tf)}$ is integrable on $[0, \infty)$. It follows from (\ref{eq2-basic}) that
   \begin{eqnarray*}
     W_2^2(\nu, \mu) &\leqslant&  2\sqrt{\mathrm{Ent}_\mu(f)} \int_0^{\infty} \sqrt{\mathrm{Ent}_\mu(P_tf)}\mathrm{d}t\\
        &\leqslant& 2\sqrt{\mathrm{Ent}_\mu(f)} \int_0^{\infty} e^{-\frac{1}{C_P}t}\sqrt{\mathrm{Var}_\mu(f)}\mathrm{d}t \ =\  2C_P \sqrt{\mathrm{Ent}_\mu(f)} \sqrt{\mathrm{Var}_\mu(f)}. 
    \end{eqnarray*}
    
Inversely, assume there exists some $C>0$ such that
   \begin{eqnarray}
       W_2^2(\nu, \mu) \leqslant 2C \sqrt{\mathrm{Ent}_\mu(f)} \sqrt{\mathrm{Var}_\mu(f)}. \label{eq2-001}
   \end{eqnarray}
Various perturbation techniques give PI with a constant $\sqrt{2}C$, see \cite{Ledoux18, Villani2} and the references therein. For completeness, we write down a sketch.

Let $h$ be Lipschitz and bounded with $\mu(h) = 0$. Let $f_t = 1+\lambda th$ for $t\approx 0$ and some parameter $\lambda>0$. It follows from  (\ref{eq2-001}) that
     \[ 2\int Q_1(th) f_t\mathrm{d}\mu \leqslant  W_2^2(f_t\mu, \mu) \leqslant 2C \sqrt{\mathrm{Ent}_\mu(f_t)}\cdot \sqrt{\mathrm{Var}_\mu(f_t)}. \]
Substituting the Taylor's expansion $Q_1(th) = tQ_th = ht - \frac12 |\nabla h|^2 t^2 + o(t^2)$ at $t=0$ into the above inequality yields
    \begin{eqnarray}
      - \mu(\Gamma(h,h)) + 2\lambda \mu(h^2) \leqslant \sqrt{2} C\lambda^2 \mu(h^2), \label{eq2-004}
    \end{eqnarray}
which implies PI by taking $\lambda = \frac{\sqrt{2}}{2C}$. We obtain the equivalence between PI and  (\ref{eq2-001})  now.

\textbf{Part 2}. When we bound relative entropy by other functionals, it should lead to new types of transportation-variance inequalities. Indeed, for any  $p\geqslant 1$ holds by Jensen's inequality (recall $\mu(f) =1$ here)  that
   \begin{eqnarray*}
     \mathrm{Ent}_\mu(f) &=& \int f\log f \mathrm{d}\mu\\ 
     &\leqslant& \log \mu(f^2)
     = \log(\mathrm{Var}_\mu(f) + 1) \\
     &\leqslant& p \log((\mathrm{Var}_\mu(f))^{\frac1p}+1)
      \ \leqslant\ p (\mathrm{Var}_\mu(f))^{\frac1p}. 
    \end{eqnarray*}
If PI holds, it follows similarly from (\ref{eq2-basic}) 
   \begin{eqnarray*}
     W_2^2(\nu, \mu) &\leqslant& 2\sqrt{\mathrm{Ent}_\mu(f)} \int_0^{\infty} \sqrt{\mathrm{Ent}_\mu(P_tf)}\mathrm{d}t\\
        &\leqslant& 2p \mathrm{Var}_\mu^{\frac1{2p}}(f) \int_0^{\infty} \mathrm{Var}_\mu^{\frac1{2p}}(P_tf)\mathrm{d}t \ \leqslant\  2C_P p^2 \mathrm{Var}_\mu^{\frac1p}(f),
   \end{eqnarray*}
which covers the second inequality in Theorem \ref{Thm1} for $p=1$ and also gives the third one
   \[  W_2^2(\nu, \mu) \leqslant 2C_P \inf\limits_{p\geqslant 1} \left\{p^2 (\mathrm{Var}_\mu(f))^{\frac1p} \right\}. \]
   
Using PI again yields
   \[ W_2^2(\nu, \mu) \leqslant 2C_P \inf\limits_{p\geqslant 1} \left\{p^2 (\mathrm{Var}_\mu(f))^{\frac1p}\right\}  \leqslant 2C_P \inf\limits_{p\geqslant 1} \left\{p^2 {\big(} C_P\mu(\Gamma(f,f)) {\big)}^{\frac1p}\right\}, \]
which gives the fourth inequality in Theorem \ref{Thm1}.  It follows the fifth inequality by taking $p=1$ that
   \begin{eqnarray}
      W_2^2(\nu, \mu) \leqslant 2C_P^2 \mu(\Gamma(f,f)). \label{eq2-006}
   \end{eqnarray}
   
Inversely, still following the routine of perturbation technique, (\ref{eq2-006}) implies PI too. More precisely, recall the first part, we have a similar result as (\ref{eq2-004}) that
   \[  - \mu(\Gamma(h,h)) + 2\lambda \mu(h^2) \leqslant 2C_P^2 \lambda^2 \mu(\Gamma(h,h)),  \]
which implies PI with a constant $\sqrt{2} C_P$ by taking $\lambda = (\sqrt{2}C_P)^{-1}$.
\end{proof}

\bigskip

\section{Derivative of  quadratic Wasserstein distance along heat flow}
 \label{DHF}
 
In this section, we compute the derivative formula of $W_2(\nu_t, \mu)$ for $\frac{\mathrm{d}\nu_t}{\mathrm{d}\mu} = P_tf$. Recall that, in our notation, Otto-Villani \cite[Lemma 2]{Otto-Villani} (see \cite[Subsection 9.3.4]{Villani1} also) was actually concerned to the upper right-hand derivative of $W_2(\nu, \nu_t)$ and found a bound as
   \begin{eqnarray} 
      \frac{\mathrm{d}}{\mathrm{d}t}^+ W_2(\nu, \nu_t)  \leqslant   \limsup\limits_{s\to 0+}  W_2(\nu_t, \nu_{t+s})/s  \leqslant \sqrt{\mathrm{I}_\mu(P_tf)},  \label{eq1-1} 
   \end{eqnarray}
provided that $V\in C^2(\mathbb{R}^n)$ and $\mathrm{D}^2 V \geqslant \rho \mathrm{I}$ for some $\rho\in \mathbb{R}$ (namely the curvature-dimension condition $CD(\rho,\infty)$). The difference between $W_2(\nu_t, \mu)$ and $W_2(\nu, \nu_t)$ is that the former might be integrable for $t\in [0, +\infty)$.

According to \cite[Exercise 2.36]{Villani1}, there exists $h_t\in L^1(\mu)$ such that $\mu(h_t) =0$ and $Q_1 h_t  \in L^1(\nu_t)$, and the conjugate pair $(Q_1h_t, h_t)$ attains the supremum as
   \begin{eqnarray}
     W_2^2(\nu_t, \mu) = 2\sup\limits_{\mu(\phi)=0}\int Q_1\phi \mathrm{d}\nu_t  = 2\int Q_1h_t \mathrm{d}\nu_t = 2 \int Q_1 h_t P_t f \mathrm{d}\mu.  \label{eq1-3}
    \end{eqnarray}

Given nice initial data, we obtain the derivative formula for $W_2^2(\nu_t,\mu)$ in almost all $t$ with no condition on curvature. 
\begin{lemma} \label{Lem1}
Assume $f \in \mathbb{D}(\mathrm{L})$ has a positive lower bound. Assume $\mathrm{L}f$  is bounded. Then for almost all $t>0$, there exists some $h_t \in L^1(\mu)$ satisfying (\ref{eq1-3}) and
  \[  \frac{\mathrm{d}}{\mathrm{d}t} W_2^2(\nu_t, \mu) = 2 \int Q_1h_t \; \mathrm{L}P_t f \mathrm{d}\mu. \]
Moreover $|\frac{\mathrm{d}}{\mathrm{d}t} W_2^2(\nu_t, \mu)| \leqslant 2W(\nu_t, \mu)\sqrt{\mathrm{I}_\mu(P_tf)}$.
\end{lemma}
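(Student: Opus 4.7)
The plan is to use the Kantorovich duality (\ref{eq1-3}) to write $W_2^2(\nu_t,\mu)=2\int Q_1 h_t\,P_tf\,d\mu$ at the optimal pair and then to differentiate in $t$ via an envelope-type argument that sees only the $P_tf$ factor. This route avoids the displacement interpolation of Otto--Villani and hence any curvature assumption on $V$. The $\mathrm I_\mu$-bound on $\tfrac{d}{dt}W_2^2(\nu_t,\mu)$ will come out by integration by parts, Cauchy--Schwarz, and the standard identity $\int|\nabla Q_1 h_t|^2\,d\nu_t=W_2^2(\nu_t,\mu)$ linking the Kantorovich potential to the optimal transport cost.

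First I would verify that for each fixed admissible $h$ the map $G_h(s):=2\int Q_1 h\,P_sf\,d\mu$ is $C^1$ with $G_h'(s)=2\int Q_1 h\cdot\mathrm LP_sf\,d\mu$; under the hypotheses, the identity $\mathrm LP_sf=P_s\mathrm Lf$ and the maximum principle yield uniform control of $P_sf$ and $\mathrm LP_sf$ on compact time intervals, so differentiation under the integral is routine. Next I would establish local absolute continuity of $t\mapsto W_2(\nu_t,\mu)$ by plugging the velocity field $-\nabla\log P_uf$ into the Benamou--Brenier formula, giving the curvature-free estimate $W_2(\nu_s,\nu_t)\le\int_s^t\sqrt{\mathrm I_\mu(P_uf)}\,du$ and hence differentiability almost everywhere. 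At each point $t$ of differentiability, the envelope step then produces the formula: since $W_2^2(\nu_s,\mu)\ge G_{h_t}(s)$ for all $s$ with equality at $s=t$, the nonnegative function $\phi(s):=W_2^2(\nu_s,\mu)-G_{h_t}(s)$ has a minimum at $t$, so $\phi'(t)=0$ and $\tfrac{d}{dt}W_2^2(\nu_t,\mu)=G_{h_t}'(t)$. Finally, integration by parts and Cauchy--Schwarz yield
\[ \Big|\tfrac{d}{dt}W_2^2(\nu_t,\mu)\Big|=\Big|2\int\nabla Q_1 h_t\cdot\nabla P_tf\,d\mu\Big|\le 2\sqrt{\int|\nabla Q_1 h_t|^2\,d\nu_t}\,\sqrt{\mathrm I_\mu(P_tf)}, \]
and the first factor on the right equals $W_2(\nu_t,\mu)$ because the quadratic Kantorovich potential generates the optimal map $T(x)=\exp_x(-\nabla Q_1 h_t(x))$ with $d(x,T(x))=|\nabla Q_1 h_t(x)|$ and $T_\#\nu_t=\mu$.

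The main obstacle is regularity: $Q_1 h_t$ is merely semi-concave, so both the integration by parts against the smooth function $P_tf$ and the identification of $|\nabla Q_1 h_t|$ with $d(x,T(x))$ require invoking a.e. differentiability of semi-concave functions on the Riemannian manifold together with the Hamilton--Jacobi equation satisfied by $Q_1 h_t$. A secondary subtlety is ensuring that the optimizer $h_t$ delivered by \cite[Exercise 2.36]{Villani1} gives $Q_1 h_t\in L^1(\nu_s)$ and $(Q_1 h_t)\cdot\mathrm LP_sf\in L^1(\mu)$ uniformly for $s$ in a neighborhood of $t$, so that $G_{h_t}$ is genuinely $C^1$ there; this should follow from local boundedness of $P_sf$ and $\mathrm LP_sf$ together with the $L^1(\mu)$-bound on $h_t$.
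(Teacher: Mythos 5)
Your proposal takes a genuinely different route from the paper's proof, and it is worth spelling out the contrast, because the paper's argument is deliberately designed to avoid exactly the two pieces of heavy machinery you invoke.

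First, you obtain local absolute continuity of $t\mapsto W_2(\nu_t,\mu)$ by feeding $-\nabla\log P_uf$ into Benamou--Brenier, i.e.\ by viewing $(\nu_u)$ as an absolutely continuous curve in Wasserstein space driven by the Fokker--Planck flow. The paper's stated motivation for Lemma~\ref{Lem1} is precisely to circumvent this: it remarks that the Otto--Villani estimate $W_2(\nu_s,\nu_t)\leqslant\int_s^t\sqrt{\mathrm I_\mu(P_uf)}\,\mathrm du$ was derived under a $CD(\rho,\infty)$ hypothesis, and it advertises the lemma as ``reproving their lemma for nice initial data but avoiding the curvature condition'' by \emph{not} appealing to the theory of Fokker--Planck on manifolds. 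You assert that the Benamou--Brenier estimate is curvature-free; that claim is plausible for sufficiently regular data, but it needs justification (one must verify that $(\nu_u)$ is a $W_2$-absolutely-continuous curve with $-\nabla\log P_uf$ as an admissible velocity, which is a nontrivial regularity statement on a manifold), and if you invoke it you forfeit the by-product the paper is after. The paper instead proves local Lipschitz regularity of $W_2^2(\nu_t,\mu)$ directly: it establishes one-sided difference-quotient bounds from the envelope inequality and then runs a careful Borel--Lebesgue covering argument (Steps~1,~3,~4 of its proof), which is elementary and curvature-free by inspection.

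Second, your derivative bound rests on the exact identity $\int|\nabla Q_1h_t|^2\,\mathrm d\nu_t=W_2^2(\nu_t,\mu)$, justified via McCann's theorem and the structure of the optimal map $T(x)=\exp_x(-\nabla Q_1h_t(x))$. This is the correct classical statement, but it brings in Brenier/McCann regularity in a setting where $h_t$ is only $L^1(\mu)$ and $Q_1h_t$ is merely semi-concave, and it is more than the lemma needs. The paper replaces this by an elementary one-sided inequality: it modifies the dual optimizer by a scaling $h_{k,t}=(1-u_k)\tilde h_{k,t}$ chosen to maximize $\int Q_1\bigl((1-u)\tilde h_{k,t}\bigr)\,\mathrm d\nu_t$ over $u\in[0,1]$ (its equation~(\ref{eq1-223})), and then a short computation with the Hamilton--Jacobi equation and the scaling property $Q_{1-u}\tilde h=\frac{1}{1-u}Q_1\bigl((1-u)\tilde h\bigr)$ gives $\int|\nabla Q_1h_{k,t}|^2\,\mathrm d\nu_t\leqslant W_2^2(\nu_t,\mu)$ directly, which is all that (\ref{eq1-4}) requires. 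The paper's remark after the lemma even acknowledges that whether the scaled potential coincides with the natural one is delicate in the $L^1$ setting, which is precisely the kind of regularity gap your appeal to McCann would need to close. In short: your envelope step is fine and matches the paper's Step~3 in spirit, but the two supporting pillars (Benamou--Brenier for absolute continuity, McCann for the energy identity) are exactly what the paper's scaling trick and covering argument were designed to replace; if you keep them, you should either supply the curvature-free justification or acknowledge that you have not reproduced the lemma's advertised independence from the Fokker--Planck theory.
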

\begin{proof}
It consists of four steps. Note that $L^1(\nu_t) \subset L^1(\mu)$ in our case since $f$ has a positive lower bound and then $\nu_t(|h|) \geqslant \inf f \cdot \mu(|h|)$. The assumption of $\mathrm{L}f \in L^\infty(E)$ is reasonable due to that the resolvent operator $R_\lambda$ sends $C_\textrm{b}(E)$ into $C_\textrm{b}(E) \cap \mathbb{D}(\mathrm{L})$ and $\mathrm{L} = - R_\lambda^{-1}+ \lambda\mathrm{I}$ (see for example Evans \cite[Subsection 7.4.1]{Evans}).

\textbf{Step 1}. To show the continuity of $W_2(\nu_t,\mu)$ in $t$. 

Using the control inequality of weighted total variation (see \cite[Proposition 7.10]{Villani1}) yields that for any $t, t'>0$
   \begin{eqnarray*}
     W_2^2(\nu_{t'}, \nu_t) &\leqslant& 2\int d^2(x_0, \cdot)\left| P_{t'}f- P_tf \right|  \mathrm{d}\mu\\
         &=& 2\int d^2(x_0, \cdot)\left| \int_t^{t'} \mathrm{L}P_sf\mathrm{d}s\right|  \mathrm{d}\mu
         \leqslant 2|t'-t| \cdot ||\mathrm{L}f||_\infty \cdot \mu(d^2(x_0,\cdot)).
   \end{eqnarray*}
It follows from the triangle inequality $\left|W_2(\nu_{t'}, \mu) - W_2(\nu_{t}, \mu)\right| \leqslant W_2(\nu_{t'}, \nu_{t})$ that $W_2(\nu_t,\mu)$ is continuous in $t$. 

\textbf{Step 2}. To choose a conjugate pair $(Q_1h_t, h_t)$ satisfying (\ref{eq1-3}) and some auxiliary ``maximality" (which will be introduced in (\ref{eq1-223}) and applied for next step). 

First of all, let $(Q_1\tilde{h}_t, \tilde{h}_t) \in L^1(\nu_t) \times L^1(\mu)$ satisfy $\mu(\tilde{h}_t) =0$ and
    \[ W_2^2(\nu_t, \mu) = 2\int Q_1 \tilde{h}_t \mathrm{d}\nu_t.  \]
$Q_1 \tilde{h}_t$ may not have a gradient, so we take a sequence of bounded Lipschitz functions $\{\tilde{h}_{k,t}\}_{k\in\mathbb{N}}$ such that $\mu(\tilde{h}_{k,t})=0$ and $(Q_1\tilde{h}_{k,t}, \tilde{h}_{k,t})$ tends to $(Q_1\tilde{h}_t, \tilde{h}_t)$ in $L^1(\nu_t) \times L^1(\mu)$ as $k \to \infty$. Then $Q_1\tilde{h}_{k,t}$ is bounded Lipschitz too (see \cite[Subsection 3.3.2]{Evans}), and there exists $u_k\in [0,1]$ such that 
   \begin{eqnarray}
      \int Q_1{\big (}(1-u_k)\tilde{h}_{k,t}{\big )} \mathrm{d}\nu_t = \sup\limits_{0\leqslant u \leqslant 1} \int Q_1{\big (}(1-u) \tilde{h}_{k,t}{\big )} \mathrm{d}\nu_t. \label{eq1-223}
   \end{eqnarray}
Denote $h_{k,t} = (1-u_k)\tilde{h}_{k,t}$.  

Without loss of generality, assume $u_\infty=\lim\limits_{k\to\infty} u_k \in [0,1]$, denote
   \begin{eqnarray}
       h_t := (1-u_\infty)\tilde{h}_t = \lim\limits_{k\to\infty} (1-u_k)\tilde{h}_{k,t}  = \lim\limits_{k\to\infty} h_{k,t} \in L^1(\mu).\label{eq-equal-or-not}
    \end{eqnarray}
We want to show that $(Q_1h_t, h_t)$ is also a conjugate pair satisfying $W_2^2(\nu_t, \mu) = 2\int Q_1 h_t \mathrm{d}\nu_t$. The difference between $(Q_1h_t, h_t)$ and $(Q_1\tilde{h}_t, \tilde{h}_t)$ is that the former can be approximated by a special sequence of bounded Lipschitz pairs with the property (\ref{eq1-223}).

To this end, by the definition of infimum convolution, we have first
   \[ h_{k,t} \geqslant Q_1h_{k,t}  = (1-u_k) Q_{1-u_k} \tilde{h}_{k,t} \geqslant (1-u_k) Q_1 \tilde{h}_{k,t}, \]
which means that $Q_1h_{k,t}$ falls between two $L^1$-convergent sequences. By virtue of the Prokhorov theorem (namely the tightness argument) together with the fact of  $L^1(\nu_t) \subset L^1(\mu)$, one can extract a subsequence of $Q_1h_{k,t}$ (denoted by itself for the ease of notation) converging in $L^1(\nu_t)$. Denote $\phi_t = \lim\limits_{k \to \infty} Q_1h_{k,t}$, which satisfies 
   \[ \phi_t(x) - h_t(y) \leqslant \frac12 d^2(x,y) \]
almost everywhere and then $\phi_t(x) \leqslant Q_1h_t(x)$ and (since $\mu(h_t) =0$) 
   \[ 2\nu_t(\phi_t) \leqslant 2\int Q_1h_t \mathrm{d}\nu_t \leqslant W_2^2(\nu_t,\mu).\] 
On the other hand, due to the definition of $h_{k,t}$ in (\ref{eq1-223}), it follows 
   \[ 2\nu_t(\phi_t) =\lim\limits_{k\to \infty} 2\nu_t(Q_1 h_{k,t}) \geqslant \lim\limits_{k\to \infty} 2\nu_t(Q_1 \tilde{h}_{k,t}) = W_2^2(\nu_t,\mu). \]
Hence, $(\phi_t, h_t)$ attains the supremum of the dual Kantorovich problem too. Moreover, it follows $\phi_t = Q_1 h_t$ almost everywhere with respect to $\nu_t$ and $\mu$ as well since $f$ has a positive lower bound.

\textbf{Step 3}. To estimate upper and lower derivatives of  $W_2^2(\nu_t,\mu)$. 

For $(Q_1 h_t,h_t)$, we have 
   \begin{eqnarray}
    \underline{D}^+_t &:=& \liminf\limits_{s\to 0+} \frac{W_2^2(\nu_{t+s}, \mu) - W_2^2(\nu_{t}, \mu) }{s} \nonumber\\
     &\geqslant&  \lim\limits_{s\to 0+} \frac{2}{s}\left(\int Q_1h_t \mathrm{d}\nu_{t+s} - \int Q_1h_t \mathrm{d}\nu_t\right) \ =\ 2 \int Q_1h_t \; \mathrm{L}P_t f \mathrm{d}\mu. \label{eq1-221}
   \end{eqnarray}
Similarly, we have
   \begin{eqnarray}
     \overline{D}^-_t &:=& \limsup\limits_{s\to 0+} \frac{W_2^2(\nu_{t}, \mu) - W_2^2(\nu_{t-s}, \mu) }{s} \nonumber\\
     &\leqslant&  \lim\limits_{s\to 0+} \frac{2}{s}\left(\int Q_1h_t \mathrm{d}\nu_t - \int Q_1h_t \mathrm{d}\nu_{t-s}\right) \ =\  2 \int Q_1h_t \; \mathrm{L}P_t f \mathrm{d}\mu. \label{eq1-222}
    \end{eqnarray}

Recall the approximating sequence $(Q_1 h_{k,t}, h_{k,t})$ for $(Q_1 h_t, h_t)$ in Step 2, using the formula of integration by parts and the H\"older inequality yields that
   \[
     \left|\int Q_1h_{k, t} \; \mathrm{L}P_t f \mathrm{d}\mu \right| = \left|\int \nabla Q_1h_{k, t} \; \nabla P_t f \mathrm{d}\mu \right| \leqslant \sqrt{\int |\nabla Q_1h_{k, t}|^2 \mathrm{d}\nu_t} \cdot \sqrt{\mathrm{I}_\mu(P_tf)}. 
    \]
Since $Q_sh_{k, t}$ solves the Hamilton-Jacobi equation $\frac{\mathrm{d}}{\mathrm{d}s} Q_sh_{k, t} + \frac12 |\nabla Q_sh_{k, t}|^2 = 0$ (see \cite[Subsection 3.3.2]{Evans}),  we have by (\ref{eq1-223}) (namely the integral ``maximality" for $Q_1h_{k, t}$) that
   \begin{eqnarray*}
     \int |\nabla Q_1h_{k, t}|^2 \mathrm{d}\nu_t &=& \lim\limits_{u\to  0+}  2\int \frac{Q_{1-u}h_{k, t} - Q_1h_{k, t}}{u} \mathrm{d}\nu_t \\
     &=& \lim\limits_{u\to  0+}  2\int \frac{\frac{1}{1-u}Q_1{\big(}(1-u)h_{k, t}{\big)} - Q_1h_{k, t}}{u} \mathrm{d}\nu_t \\
     &\leqslant& \lim\limits_{u\to  0+}  2\cdot \frac{\frac{1}{1-u} - 1}{u} \cdot \int Q_1h_{k,t} \mathrm{d}\nu_t \ \leqslant \ W_2^2(\nu_t,\mu),
   \end{eqnarray*}
which implies by taking $k \to \infty$
   \begin{eqnarray}
       2\left|\int Q_1h_t \; \mathrm{L}P_t f \mathrm{d}\mu\right| = \lim\limits_{k\to +\infty} 2\left|\int Q_1h_{k,t} \; \mathrm{L}P_t f \mathrm{d}\mu\right| \leqslant 2W_2(\nu_{t}, \mu)\sqrt{\mathrm{I}_\mu(P_tf)} =: A_t. \label{eq1-4}
   \end{eqnarray}
Note that $A_t$ is continuous in $t$.
  
\textbf{Step 4}. To show the Lipschitz property of $W_2^2(\nu_t,\mu)$. 

For convenience, denote $F(t) = W_2^2(\nu_t,\mu)$. Heuristically, using (\ref{eq1-221}) and (\ref{eq1-4}) yields a local estimate that for any $t>0$ there exists $s>0$ such that $F(t+s) - F(t) \geqslant -O(s)$. It follows $F(b) - F(a) \geqslant -O(b-a)$ for any interval $[a,b]\subset\mathbb{R}^+$ if one could ``find" a finite partition of $[a,b]$ and sum up all the local estimates. Similarly, using (\ref{eq1-222}) and (\ref{eq1-4}) yields $F(b) - F(a) \leqslant O(b-a)$, and then gives the local Lipschitz property.

The rest of the proof is basically a careful application of Borel-Lebesgue covering theorem. Fix arbitrary $\varepsilon >0$. Let $K = \sup\limits_{t\in [a,b]}A_t+\varepsilon$. For any $t\in [a,b]$, there exists some $\eta_t\in(0,b-a]$ by using (\ref{eq1-221}) and (\ref{eq1-4}) such that for all $s\in (0,\eta_t]$
    \[ F(t+s) - F(t) > s \left( 2 \int Q_1h_t \; \mathrm{L}P_t f \mathrm{d}\mu - \varepsilon\right) \geqslant -s(A_t+\varepsilon) \geqslant -sK \geqslant -\eta_t K. \]
On the other hand, the continuity of $F(t)$ implies there exists $\tilde{\eta}_t\in (0, \eta_t]$ such that for all $-s\in [-\tilde{\eta}_t, 0]$
     \[ |F(t) - F(t-s)| <  \eta_t K. \]
Then the open interval $I_t = (t-\tilde{\eta}_t, t+\eta_t)$ is of length no less than $\eta_t$ and no more than $2\eta_t$, and holds for any $t_2\geqslant t \geqslant t_1$ or $t\geqslant t_2 \geqslant t_1$ in $I_t$
     \begin{eqnarray}
         F(t_2) - F(t_1) > -2\eta_t K \geqslant -2|I_t|K. \label{eq-lower-Lip}
    \end{eqnarray}
(Notice that we don't know whether (\ref{eq-lower-Lip}) is true for $t_2\geqslant t_1> t$.)
 
The collection of all $I_t$ becomes an open covering of $[a,b]$, which implies a finite sub-covering $\mathcal{I}$. To reduce overlaps, we have to do some selection. Starting from $t_0 = a$, one can successively take the $i$-th open interval $I_{t_i}$ from $\mathcal{I}$ for $i=1, 2\ldots$ satisfying next two properties:
\begin{enumerate}
\item[(1).]  $I_{t_i} \cap I_{t_{i-1}} \neq \emptyset$, and $I_{t_i}$ contains the right-hand endpoint of $I_{t_{i-1}}$.
\item[(2).] If there is another $I_{t_*} \in \mathcal{I}$ intersecting with $I_{t_{i-1}}$, then $I_{t_*} \subset \bigcup\limits_{j\leqslant i} I_{t_j}$, namely the right-hand endpoint of $I_{t_*}$ doesn't exceed $I_{t_i}$. It means $I_{t_i}$ is the most effective cover than any other $I_{t_*}$.
\end{enumerate}
This procedure will stop at time $N$ once $I_{t_N}$ contains $b$.  

Now, we have a chain $I_{t_0}, I_{t_1}, \ldots, I_{t_N}$ satisfying that each element only intersects with its neighbors, which means their overlap is at most $2$-fold for every point in $[a,b]$. Let $t_{i-1, i} \in I_{t_{i-1}} \cap I_{t_i}$ satisfy $t_{i-1, i} \leqslant t_i$  for $i = 1, \dots, N$ and $a\leqslant t_{0,1} \leqslant t_{1,2} \cdots \leqslant t_{N-1,N} \leqslant b$. It must occur either $t_{i-1, i} \leqslant t_i \leqslant t_{i, i+1}$ or $t_{i-1, i} \leqslant t_{i, i+1} \leqslant t_i$ for each $i$. In any case, we obtain an interpolation by (\ref{eq-lower-Lip})
      \begin{eqnarray*}
       F(b) - F(a)    &=& F(b) - F(t_{N-1,N}) + \sum\limits_{i=1}^{N-1} F(t_{i,i+1}) - F(t_{i-1,i}) + F(t_{0,1}) - F(a)\\
       &\geqslant& -2|I_{t_N}|K - \sum\limits_{i=1}^{N-1} 2|I_{t_{i}}|K - 2|I_{t_0}|K \ \geqslant\ - 8(b-a)K.
      \end{eqnarray*}

Similarly, it follows from (\ref{eq1-222}) and (\ref{eq1-4}) that
    \[ F(b) - F(a) \leqslant  8(b-a)K.\]

Combining the above estimates yields that $F(t)=W_2^2(\nu_t,\mu)$ is locally Lipschitz and then has a derivative for almost all $t>0$ as
  \[ \frac{\mathrm{d}}{\mathrm{d}t} W_2^2(\nu_t, \mu) = 2 \int Q_1h_t \; \mathrm{L}P_t f \mathrm{d}\mu. \]
It follows that for almost all $t>0$
   \[ \left|\frac{\mathrm{d}}{\mathrm{d}t} W_2^2(\nu_t, \mu)\right| \leqslant 2W_2(\nu_{t}, \mu)\sqrt{\mathrm{I}_\mu(P_tf)}, \]
which can be rewritten to 
   \[ \left|\frac{\mathrm{d}}{\mathrm{d}t} W_2(\nu_t, \mu)\right| \leqslant \sqrt{\mathrm{I}_\mu(P_tf)}. \]
The proof is completed.
\end{proof}

\begin{remark}
It is interesting to ask further that whether $h_t = \tilde{h}_t$ almost everywhere (namely $u_\infty = 0$ in (\ref{eq-equal-or-not})). For any positive $\alpha$ and $\beta$ with $\alpha+ \beta = 1$, we have $\alpha Q_1\tilde{h}_t + \beta Q_1  h_t \leqslant Q_1 {\big(}\alpha \tilde{h}_t + \beta h_t {\big)}$ and
   \begin{eqnarray*}
    \ \ \ \ W_2^2(\nu_t,\mu) = 2\int \alpha Q_1\tilde{h}_t + \beta Q_1h_t \mathrm{d}\nu_t \leqslant  2\int Q_1 {\big(}\alpha \tilde{h}_t + \beta h_t {\big)} \mathrm{d}\nu_t \leqslant W_2^2(\nu_t,\mu),
   \end{eqnarray*} 
which implies $\alpha Q_1\tilde{h}_t + \beta Q_1h_t = Q_1 {\big(}\alpha \tilde{h}_t + \beta h_t {\big)}$ almost everywhere. It follows that for almost every $x\in E$ and $h =\tilde{h}_t$ or $h_t$ or $\alpha \tilde{h}_t + \beta h_t$, $Q_1h(x)$ can take its value at the same critical point $y_x$ such that $Q_1h(x) = h(y_x) + \frac12d^2(x,y_x)$  (or the same point sequence $\{y_x^{(n)}\}$ such that $Q_1h(x) = \lim\limits_{n\to +\infty} h(y_x^{(n)}) + \frac12d^2(x,y_x^{(n)})$). If $u_\infty \neq 0$ and $\tilde{h}_t$ is bounded and differentiable, we have $\nabla h_t(y_x) = \nabla \tilde{h}_t(y_x) = x - y_x$ and then $\nabla h_t(y_x) = \nabla \tilde{h}_t(y_x)  \equiv 0$  since $h_t = (1-u_\infty)\tilde{h}_t$, which means $\tilde{h}_t$ has to be a constant function and furthermore $\tilde{h}_t \equiv 0$ for $\mu(\tilde{h}_t) = 0$. This suggests that $h_t = \tilde{h}_t$ is true, however, it seems complicated to deal with $L^1$ functions.
\end{remark}

The same argument is also effective in reproving Lemma 2 in \cite{Otto-Villani} as
   \[ \left|\frac{\mathrm{d}}{\mathrm{d}t} W_2(\nu, \nu_t)\right| \leqslant \sqrt{\mathrm{I}_\mu(P_tf)}, \]
which avoids using the second inequality in (\ref{eq1-1}).

 \bigskip

\section{The second proof of Theorem \ref{Thm1}}
 \label{PT1}
 
 \begin{proof}  Assume PI holds with a constant $C_P$. Recall that
   \[ \mathrm{Ent}_\mu(P_tf) \leqslant \mathrm{Var}_\mu(P_tf) \leqslant \exp\{-2t/C_P\} \mathrm{Var}_\mu(f), \]
which implies $\mathrm{Ent}_\mu(P_tf) \to 0$ for $t\to \infty$. Using the same method in the second part of \cite[Lemma 3]{Otto-Villani} yields $W_2(\nu_t, \mu) \to 0$ too. More precisely, $W_2(\nu_t, \mu)$ decays exponentially fast due to that for any continuous $\xi$ with $|\xi(x)| \leqslant C(d^2(x_0, x) +1)$,
   \begin{eqnarray*}
      \left| \int \xi \mathrm{d}\nu_t - \int \xi \mathrm{d}\mu \right| 
      &\leqslant& C\int |P_tf - 1|(d^2(x_0, \cdot) +1)d\mu \\
       &\leqslant& C\sqrt{\mathrm{Var}_\mu(P_tf)} \sqrt{\mu((d^2(x_0, \cdot) +1)^2)}, 
    \end{eqnarray*}
where the integrability of $d^4(x_0,\cdot)$ comes from PI as well.
 
For simplicity, assume $f$ fulfills all the conditions in Lemma \ref{Lem1}, then we have by using the H\"older inequality to get (\ref{eq2-basic}) again
     \begin{eqnarray*} 
        W_2^2(\nu, \mu) &=& \left(\int_0^{\infty} \frac{\mathrm{d}}{\mathrm{d}s}W_2(\nu_s, \mu) \mathrm{d}s \right)^2
          \ \leqslant\  \left(\int_0^{\infty} \sqrt{\mathrm{I}_\mu(P_sf)} \mathrm{d}s \right)^2 \\
          &=&  \left(\int_0^{\infty} \frac{\sqrt{\mathrm{I}_\mu(P_sf)}}{\sqrt[4]{\mathrm{Ent}_\mu(P_sf)}} \cdot \sqrt[4]{\mathrm{Ent}_\mu(P_sf)} \,\mathrm{d}s \right)^2 \ \leqslant\  2\sqrt{\mathrm{Ent}_\mu(f)} \int_0^{\infty} \sqrt{\mathrm{Ent}_\mu(P_tf)}\mathrm{d}t.
      \end{eqnarray*}
The following steps are the same as those in Section \ref{FPT}.
\end{proof}

Alternatively, using Lemma \ref{Lem1} and H\"older inequality yields also for any $t>0$
    \begin{eqnarray}
      W_2^2(\nu_t, \mu) = \int_ t^{\infty} \frac{\mathrm{d}}{\mathrm{d}s}W_2^2(\nu_s, \mu) \mathrm{d}s  &\leqslant& 2\int_t^{\infty}  W_2(\nu_s, \mu) \sqrt{\mathrm{I}_\mu(P_sf)} \mathrm{d}s
        \nonumber\\
       &\leqslant& 2 \sqrt{\int_t^{\infty}  W_2^2(\nu_s, \mu)\mathrm{d}s} \cdot  \sqrt{\int_t^{\infty} \mathrm{I}_\mu(P_sf) \mathrm{d}s} \nonumber\\
       &=& 2 \sqrt{\int_t^{\infty}  W_2^2(\nu_s, \mu)\mathrm{d}s} \cdot \sqrt{\mathrm{Ent}_\mu(P_t f)} \label{eq2-Ent}
    \end{eqnarray}
(\ref{eq2-Ent}) looks like (\ref{eq2-basic}), which is still useful to prove Theorem \ref{Thm1} as follows.
 
Denote $\mathcal{W}_t = \sqrt{\int_t^{\infty}  W_2^2(\nu_s, \mu)\mathrm{d}s}$ (it is finite since $W_2(\nu_t, \mu)$ decays exponentially fast), (\ref{eq2-Ent}) can be rewritten to
   \[ -\frac{\mathrm{d}}{\mathrm{d}t} \mathcal{W}_t \leqslant \sqrt{\mathrm{Ent}_\mu(P_tf)} \leqslant \sqrt{\mathrm{Var}_\mu(P_tf)} \leqslant \exp\{-t/C_P\}\sqrt{\mathrm{Var}_\mu(f)}.  \]
and then
   \begin{eqnarray*}
     \mathcal{W}_t  =  \int_t^\infty -\frac{\mathrm{d}}{\mathrm{d}s} \mathcal{W}_s \mathrm{d}s &\leqslant& \int_t^\infty \exp\{-s/C_P\} \mathrm{d}s\sqrt{\mathrm{Var}_\mu(f)} \\
        &=& C_P\exp\{-t/C_P\} \sqrt{\mathrm{Var}_\mu(f)}. 
    \end{eqnarray*}
Substituting this estimate back to (\ref{eq2-Ent}) for $t=0$ gives us
   \begin{eqnarray*}
       W_2^2(\nu, \mu) &\leqslant& 2 \sqrt{\int_0^{\infty}  2C_P\mathrm{Var}_\mu(P_sf) \mathrm{d}s} \cdot \sqrt{\mathrm{Ent}_\mu(f)} \nonumber\\
       &\leqslant& 2 \sqrt{\int_0^{\infty}  2C_P\exp\left(-\frac{2}{C_P}s\right)\mathrm{Var}_\mu(f) \mathrm{d}s} \cdot \sqrt{\mathrm{Ent}_\mu(f)} \nonumber\\
       &=& 2C_P \sqrt{\mathrm{Var}_\mu(f)}\cdot \sqrt{\mathrm{Ent}_\mu(f)}.
    \end{eqnarray*}
The following steps are the same as before.

By the way, if one is concerned to the quantity $W_2^2(\tilde{\nu}_t,\mu)$ for $\frac{\mathrm{d}\tilde{\nu}_t}{\mathrm{d}\mu} = \frac{|P_t\sqrt{f}|^2}{\mu(|P_t\sqrt{f}|^2)}$, it also decays exponentially fast provided that PI holds. Firstly we have for any $g^2\mu\in \mathcal{P}(E)$ (denote $m= \mu(g)$ and $\sigma_t^2 = \mu\left((P_t g-m)^2\right)$)
   \begin{eqnarray*}
      \mathrm{Var}_\mu(g^2) \leqslant \int |g^2-m^2|^2\mathrm{d}\mu \leqslant 2\int |g-m|^4 \mathrm{d}\mu + 8m^2\int |g-m|^2\mathrm{d}\mu.         
   \end{eqnarray*}
Then it follows from PI that
   \begin{eqnarray*}
      \frac{\mathrm{d}}{\mathrm{d}t} \mu\left( \left(P_tg - m\right)^4 \right) 
      &=& -12\mu\left( \left(P_tg-m\right)^2 \left|\nabla P_tg\right|^2 \right)\\
      &\leqslant& -3C_P^{-1}\mu\left(\left( \left(P_t g-m\right)^2   - \sigma_t^2 \right)^2\right)\\
      &=& -3C_P^{-1}\left[ \mu\left(\left(P_tg - m\right)^4 \right) - \sigma_t^4 \right],
   \end{eqnarray*}
and 
   \[ \frac{\mathrm{d}}{\mathrm{d}t} \sigma_t^4 = -4\sigma_t^2 \mu\left( \left| \nabla P_tg \right|^2 \right)
         \leqslant -4C_P^{-1}\sigma_t^4. \]
Set $\Lambda_t = \mu\left( \left(P_tg-m\right)^4 \right) + \lambda \sigma_t^4$ with the parameter $\lambda$, we have
   \[ \frac{\mathrm{d}}{\mathrm{d}t} \Lambda_t \leqslant C_P^{-1} \left(-3\Lambda_t  + (3 - \lambda) \sigma_t^4\right), \]
which implies by taking $\lambda=3$ that
   \[ \frac{\mathrm{d}}{\mathrm{d}t} \Lambda_t \leqslant -3C_P^{-1}\Lambda_t \]
and then $\Lambda_t \leqslant \exp\left(-3t/C_P\right) \Lambda_0$. 

Hence using Theorem \ref{Thm1} yields for $g= \sqrt{f}$ that
   \begin{eqnarray*}
      W_2^2\left(\tilde{\nu}_t, \mu \right) &\leqslant& 2C_P \mathrm{Var}_\mu(\frac{\mathrm{d}\tilde{\nu}_t}{\mathrm{d}\mu}) 
         \ \leqslant\   \frac{2C_P}{\left(\mu(|P_t g|^2)\right)^2}  \mathrm{Var}_\mu \left((P_t g)^2\right)\\
         &\leqslant&  \frac{4C_P}{m^4} (\Lambda_t + 4m^2 \sigma_t^2)
         \ \leqslant\ \frac{4C_P}{m^4}(e^{-3t/C_P}\Lambda_0 + e^{-2t/C_P}4m^2\sigma_0^2),  
   \end{eqnarray*}
where the total rate is no more than $e^{-2t/C_P}$.

 \bigskip
 
 \section{The logarithmic Sobolev inequality and strict contraction of heat flow in Wasserstein space}
 \label{ELSI}
 
In this section, we prove Proposition \ref{Prop1}. The curvature-dimension condition plays a fundamental role such that we can compare several functionals for heat flow at different times. The derivative estimate in previous section is also useful.
 
 \begin{proof} Assume $V$ is a smooth potential satisfying the curvature-dimension condition $CD(\rho,\infty)$.
 
 If the LSI holds, it is known that the entropy along heat flow decays exponentially fast. Moreover, the Talagrand inequality comes true (see \cite{Otto-Villani} or \cite[Theorem 9.6.1]{BGL}), namely for any positive bounded $f$ and any $t > T>0$
    \[ W_2^2(P_tf \mu,\mu) \leqslant 2C_{LS} \mathrm{Ent}_\mu(P_t f) \leqslant 2C_{LS} e^{-2(t-T)/C_{LS}} \mathrm{Ent}_\mu(P_T f). \]
On the other hand, based on the logarithmic Harnack inequlity (see \cite[Remark 5.6.2]{BGL})
  \[ P_T(\log f)(x) \leqslant \log P_Tf(y) +  \frac{\rho d(x,y)^2}{2(e^{2\rho T} -1)}, \]
it follows from the the same argument as \cite[Page 446]{BGL} that
   \[ \mathrm{Ent}_\mu(P_T f) \leqslant \frac{1}{2\beta(T)} W_2^2(f \mu,\mu),  \]
where $\frac{1}{\beta(T)} = \frac{\rho}{1-e^{-2\rho T}} - \rho \ (=\frac{1}{2T} \ \textrm{for}\ \rho = 0)$. Combining the above estimates yields
   \[  W_2^2(P_tf \mu,\mu) \leqslant  \gamma(T) e^{-2t/C_{LS}} W_2^2(f \mu,\mu) \]
by letting $\gamma(T) = \frac{C_{LS} e^{2T/C_{LS}}}{\beta(T)}$, which attains its minimum at $T_0=\frac{1}{2|\rho|}\log (1+C_{LS}|\rho|)$. So now we obtain the exponential decay for $t > T_0$. 

For $0<t\leqslant T_0$, there is a general bound according to the heat flow contraction in Wasserstein space (see \cite[Theorem 9.7.2]{BGL}) as
   \[ W_2^2(P_tf \mu,\mu) \leqslant  e^{-2\rho t} W_2^2(f \mu,\mu) = e^{(2C_{LS}^{-1} - 2\rho)t} e^{-2 t/C_{LS}} W_2^2(f \mu,\mu). \]
Combining two regions gives us a control constant $C := \sqrt{\max\{\gamma(T_0), e^{(2C_{LS}^{-1} - 2\rho)T_0}, 1\}}$ such that for all $t>0$ and $\kappa :=C_{LS}^{-1}$
   \[ W_2(P_tf \mu,\mu) \leqslant  Ce^{-\kappa t} W_2(f \mu,\mu). \]
   
 Conversely, if $W_2(P_tf \mu,\mu) \leqslant Ce^{-\kappa t} W_2(f\mu,\mu)$, there exists $t$ (independent of $f$) such that $\eta := Ce^{-\kappa t} < 1$. Using the derivative estimate for nice $f$ (see Lemma \ref{Lem1}) yields
     \begin{eqnarray*}
       W_2^2(f \mu,\mu)
           &=& W_2^2(f \mu,\mu) - W_2^2(P_tf \mu,\mu) + W_2^2(P_tf \mu,\mu) \\
           &\leqslant& \int_0^t 2W_2(P_sf\mu,\mu) \sqrt{\mathrm{I}_\mu(P_sf)} \mathrm{d}s + \eta^2 W_2^2(f \mu,\mu) \\
           &\leqslant& 2\left( \int_0^t W_2^2(P_sf\mu,\mu)  \mathrm{d}s \right)^\frac12 \left( \int_0^t  \mathrm{I}_\mu(P_sf) \mathrm{d}s \right)^\frac12 + \eta^2 W_2^2(f \mu,\mu).           
     \end{eqnarray*}
Based on the heat flow contraction and information contraction (see \cite[Eq. 5.7.4]{BGL})
    \[ \mathrm{I}_\mu(P_sf)  \leqslant  e^{-2\rho s}\mathrm{I}_\mu(f),  \]
we have further
    \begin{eqnarray*}
      W_2^2(f \mu,\mu)
           &\leqslant& 2 \left( \int_0^t C^2e^{-2\kappa t}W_2^2(f\mu,\mu)  \mathrm{d}s \right)^\frac12 \left( \int_0^t  e^{-2\rho s} \mathrm{I}_\mu(f) \mathrm{d}s \right)^\frac12  \hspace*{-3mm} + \eta^2 W_2^2(f \mu,\mu)\\
           &\leqslant& (\varepsilon + \eta^2) W_2^2(f \mu,\mu) + \frac{C^2(1-e^{-2\kappa t})(1-e^{-2\rho t})}{4\kappa\rho\varepsilon} \mathrm{I}_\mu(f),
      \end{eqnarray*}
where the last step comes from the Cauchy-Schwarz inequality for any $\varepsilon>0$. It follows $\textrm{W}_2\textrm{I}$ by taking $\varepsilon = \eta = \frac12$  explicitly that
    \[  W_2^2(f \mu,\mu) \leqslant \frac{2C^2(1-e^{-2\rho t})}{\kappa\rho} \mathrm{I}_\mu(f). \]
Since $\textrm{W}_2\textrm{I}$ is equivalent to LSI under $CD(\rho,\infty)$ by virtue of the HWI inequality (see \cite{Otto-Villani} or \cite[Subsection 9.3]{BGL})
    \[ \mathrm{Ent}_\mu(f) \leqslant W_2(f\mu,\mu)\sqrt{\mathrm{I}_\mu(f)} - \frac{\rho}{2} W_2^2(f\mu,\mu), \]
we complete the proof.
 \end{proof}

 \bigskip

\section{Centralization of quadratic Wasserstein distance}
 \label{CWD}

Recall the notation $c=\mu(\sqrt{f})$ and $\sigma^2 = \mathrm{Var}_\mu(\sqrt{f})$, now we prove Theorem \ref{Thm2}. 

\begin{proof}
For any bounded Lipschitz $h$ with $\mu(h) =0$, let $m_t = \mu(Q_th)$, we have
   \begin{eqnarray*}
      \mu(Q_th f) &=& \int Q_th (\sqrt{f} -c)^2 \mathrm{d}\mu + 2c \int Q_th (\sqrt{f}-c)\mathrm{d}\mu + c^2 \int Q_th\mathrm{d}\mu\\
     &=& \int Q_th (\sqrt{f} -c)^2 \mathrm{d}\mu + 2c \int (Q_th - m_t) (\sqrt{f}-c)\mathrm{d}\mu + c^2m_t.
   \end{eqnarray*}
Taking  any interval $[a,b] \subset \mathbb{R}^+$ and any nonnegative $\phi \in C^1([a,b])$, we integrate both sides to get
  \begin{eqnarray*}
    &&\mathbb{I}_0 \ :=\ \int_a^b \mu(Q_th f)\phi \mathrm{d}t\\ 
    &=& \hspace*{-3mm} \int_a^b \mu(Q_th (\sqrt{f} -c)^2)\phi \mathrm{d}t +  2c \int_a^b \mu{\big(}(Q_th - m_t)(\sqrt{f}-c){\big)} \phi \mathrm{d}t+\ c^2 \int_a^b m_t\phi \mathrm{d}t. 
   \end{eqnarray*}
For convenience, denote the right-hand three terms by $\mathbb{I}_1, \mathbb{I}_2, \mathbb{I}_3$ respectively. Using the Cauchy-Schwarz, H\"older and Poincar\'e inequalities yields for any $\lambda > 0$
   \begin{eqnarray*}
       \mathbb{I}_2
       &=& 2c\int \left(\int_a^b (Q_th - m_t)\phi(t) \mathrm{d}t\right)  (\sqrt{f}-c)\mathrm{d}\mu \\
       &\leqslant& \lambda c^2 \int  \left(\int_a^b (Q_th - m_t)\phi(t) \mathrm{d}t\right)^2\mathrm{d}\mu + \frac1{\lambda}\mu((\sqrt{f}-c)^2) \\
       &\leqslant& \lambda c^2 (b-a) \int  \int_a^b (Q_th - m_t)^2 \phi^2(t) \mathrm{d}t \mathrm{d}\mu + \frac1{\lambda}\sigma^2\\
       &=& \lambda c^2 (b-a) \int_a^b \mu \left((Q_th - m_t)^2\right) \phi^2(t) \mathrm{d}t  + \frac1{\lambda}\sigma^2\\
       &\leqslant& \lambda c^2 (b-a) C_P \int_a^b \mu \left(|\nabla Q_th|^2\right) \phi^2(t) \mathrm{d}t  + \frac1{\lambda}\sigma^2\\
       &=& 2 \lambda c^2 (b-a) C_P \int \int_a^b - \frac{\mathrm{d}}{\mathrm{d}t} Q_th \; \phi^2(t) \mathrm{d}t\mathrm{d}\mu + \frac1{\lambda}\sigma^2,
   \end{eqnarray*}
where the last step comes from the Hamilton-Jacobi equation. Using the integration by parts gives
    \[ \int_a^b -  \frac{\mathrm{d}}{\mathrm{d}t} Q_th \; \phi^2(t) \mathrm{d}t =  Q_ah \phi^2(a) - Q_bh \phi^2(b) + \int_a^bQ_th \cdot 2\phi\phi'\mathrm{d}t. \]
If $\phi(a) = \phi(b) = 0$, we have further
   \[ \mathbb{I}_2 \leqslant 4 \lambda c^2 (b-a) C_P \int_a^b m_t \phi\phi' \mathrm{d}t  +  \frac1{\lambda}\sigma^2, \]
and then 
   \[ \mathbb{I}_2 + \mathbb{I}_3 \leqslant   c^2 \int_a^b m_t \phi \left[4 \lambda (b-a) C_P\phi' + 1\right]  \mathrm{d}t + \frac1{\lambda}\sigma^2. \]
   
Now we want to drop the first integral on the right side of above inequality. For instance, take $a= \frac12$, $b=1$, $\phi(t) = (t-a)(b-t)$ (satisfying $\phi(a) = \phi(b) = 0$, $\phi\geqslant 0$ and $|\phi'| \leqslant \frac12$), and $\lambda = C_P^{-1}$, then  for $t\in [a,b]$, the quantity 
   \[ \psi := \left(4 \lambda (b-a) C_P\phi' + 1\right) \geqslant 0,\]
which implies $ \int_a^b m_t \phi\psi  \mathrm{d}t \leqslant 0$ since the monotonicity of $Q_t$ in $t$ gives $m_t=\mu(Q_th) \leqslant \mu(h) = 0$. Hence $\mathbb{I}_2 + \mathbb{I}_3 \leqslant  C_P\sigma^2$.

Finally, combining all above estimates yields
    \[ \mathbb{I}_0 \leqslant \mathbb{I}_1 + C_P\sigma^2. \]
Denote $M = \int_a^b \phi\mathrm{d}t = \frac{1}{48}$, it follows  
   \[ M \cdot \mu(Q_bh f) \leqslant \mathbb{I}_0 \leqslant \mathbb{I}_1 + C_P\sigma^2 \leqslant M \cdot \mu(Q_ah (\sqrt{f}-c)^2) + C_P\sigma^2, \]
which implies by the Kantorovich dual of $W_2$-distance that
    \[ \frac{M}{2b} W_2^2(f\mu, \mu) \leqslant \frac{M}{2a} \sigma^2 W_2^2\left(\frac{(\sqrt{f}-c)^2}{\sigma^2}\mu,\mu\right) + C_P\sigma^2. \]
The proof is completed.
\end{proof}

When we check the proof, for any $\theta$ still holds
  \begin{eqnarray*}
    \mu(Q_th f) &=& \mu(Q_th (\sqrt{f} -\theta)^2) + 2\theta \mu{\big(}Q_th(\sqrt{f}-\theta){\big)} + \theta^2 \mu(Q_th)\\
    &=& \mu(Q_th (\sqrt{f} -\theta)^2) + 2\theta \mu{\big(}(Q_th-m_t)(\sqrt{f}-\theta){\big)} + (2\theta c - \theta^2) \mu(Q_th)\\
    &=& \mu(Q_th (\sqrt{f} -\theta)^2) + 2\theta \mu{\big(}(Q_th-m_t)(\sqrt{f}-c){\big)} + (2\theta c - \theta^2) \mu(Q_th).
   \end{eqnarray*}
Denote $\sigma_\theta^2 = \mu((\sqrt{f}-\theta)^2)$. Once $\theta\in (0, 2c)$, we have by the same argument
  \[ W_2^2(f\mu, \mu) \leqslant C_1(\theta) \sigma_\theta^2 W_2^2\left(\frac{(\sqrt{f}-\theta)^2}{\sigma_\theta^2}\mu,\mu\right) + C_2(\theta)C_P\sigma^2, \] 
where $C_1(\theta)$ and $C_2(\theta)$ are two constants depending on $\theta$.

\bigskip

\section{Application to quadratic transportation-information inequality}
 \label{ATI}
  
According to \cite{CG, CGW, Liuy}, the Lyapunov condition (\ref{eqLya}) implies that there are two constants $C_3, C_4>0$ such that
   \begin{eqnarray}
     \int d^2(x_0, \cdot) h^2 \mathrm{d} \mu \leqslant C_3 \int\left| \nabla h \right|^2 \mathrm{d}\mu + C_4\int h^2\mathrm{d}\mu,  \label{eq5-1}
  \end{eqnarray}
and then implies $\textrm{W}_2\textrm{I}$ by \cite{Liuy}, which partially depends on two facts that (\ref{eq5-1}) implies $\textrm{W}_2\textrm{H}$ and $\textrm{W}_2\textrm{H}$ has a Bobkov-G\"otze's characterization. 

Now there appears another way. For unbounded manifolds,  (\ref{eq5-1}) implies there exists some $r>0$ such that
    \[ \int d^2(x_0, \cdot) h^2 \mathrm{d} \mu \leqslant C_5 \int\left| \nabla h \right|^2 \mathrm{d}\mu + C_6\int_{d(x_0,\cdot) \leqslant r} h^2\mathrm{d}\mu, \]
which leads to PI by \cite{BBCG}. Then using Theorem \ref{Thm2} and (\ref{eq5-1}) and PI yields
   \[ W_2^2(\nu,\mu) \leqslant 2C_1\int \left(d^2(x_0,\cdot)+\mu\left(d^2(x_0,\cdot)\right)\right)(\sqrt{f} -c)^2 \mathrm{d}\mu + C_2 \sigma^2
        \leqslant C_7\mathrm{I}_\mu(\nu|\mu), \]
where we use the fact that for any $x$ and any bounded $h$ with $\mu(h) = 0$ holds
    \[ Q_1h(x) \leqslant \int h(y) + \frac12d^2(x,y) \mathrm{d}\mu(y) \leqslant d^2(x_0,\cdot)+\mu\left(d^2(x_0,\cdot)\right). \]
Hence we reach $\textrm{W}_2\textrm{I}$.

\subsection*{Acknowledgements}

{\small I am so grateful to the anonymous referee for his/her conscientious reading with many suggestions and comments on the first version of this paper. I also thank Prof. Li-Ming Wu and Prof. Feng-Yu Wang very much for their kindly comments during the conferences held in WHU and BNU respectively. The author is supported by NSFC (no. 11431014, no. 11688101), AMSS research grant (no. Y129161ZZ1), and Key Laboratory of Random Complex Structures and Data, Academy of Mathematics and Systems Science, Chinese Academy of Sciences (No. 2008DP173182).}




%

\end{document}